\documentclass[12pt, a4paper]{article}
\usepackage{graphicx} 
\usepackage{amssymb}
\usepackage{mathrsfs}
\usepackage{amsmath}
\usepackage{amsthm}
\usepackage{appendix}
\usepackage{geometry}
\usepackage{circledsteps}
\usepackage{indentfirst}
\usepackage{xcolor}
\newtheorem{statement}{Statement}[section]

\theoremstyle{definition}
\newtheorem{definition}[statement]{Definition}

\theoremstyle{plain}
\newtheorem{theorem}[statement]{Theorem}
\newtheorem{lemma}[statement]{Lemma}
\newtheorem{proposition}[statement]{Proposition}

\newtheorem{remark}[statement]{Remark}

\numberwithin{equation}{section}

\allowdisplaybreaks[4]
\begin{document}

\title{Ergodicity of stochastic reaction-diffusion equations on unbounded domains  driven by space-time white noise}

\author{Shijie Shang$^{1}$, Jianliang Zhai$^{1}$,
		Tusheng Zhang$^{1,2}$}
	\footnotetext[1]{\, School of Mathematics, University of Science and Technology of China, Hefei, China. Email: sjshang@ustc.edu.cn (Shijie Shang), zhaijl@ustc.edu.cn (Jianliang Zhai).}
	\footnotetext[2]{\, Department of Mathematics, University of Manchester, Manchester M13 9PL, United Kingdom. Email: tusheng.zhang@manchester.ac.uk }

\date{\today}

\maketitle

\begin{abstract}
\noindent
We consider the stochastic reaction-diffusion equation on the whole space:
\begin{align*}
	\left\{
	\begin{aligned}
		du(t,x) &=\frac{1}{2}\partial_{xx} u(t,x) dt+b(u(t,x))dt+ \sigma(u(t,x)) W(dt,dx),\quad t\geq 0,\ x\in \mathbb{R},\\
		u(0,x)&=u_0(x), \quad x\in \mathbb{R},
	\end{aligned}
	\right.
\end{align*}
where $W(dt,dx)$ is a space-time white noise, $b$, $\sigma$ are measurable coefficients.
We first show that the solution is not strong Feller, and then establish the existence and uniqueness of invariant measures, 
exponential mixing as well as irreducibility for the solutions. 
To overcome the difficulties caused by the unbounded domain, we design special controls and controlled equations to prove the irreducibility. To obtain the exponential mixing property under the dissipative condition
$$(b(x)-b(y))(x-y)\leq -\alpha (x-y)^2,$$
the obstacle is the lack of the It\^{o} formula/energy equality. To circumvent the problem, we manage to find a new way to fully exploit comparison principles, which we believe could be useful for other type of  stochastic partial differential equations driven by multiplicative space-time noise. We note that the dissipative condition allows the coefficients to be of polynomial, even exponential growth. There exist plenty of models that satisfy the dissipative condition, including the Allen-Cahn type equations. 

To the best of our knowledge, this is the first paper to establish the ergodicity, exponential mixing and irreducibility  of stochastic reaction-diffusion equations (SRDEs) driven by multiplicative space-time noise on unbounded domains. The results on exponential mixing are also new for (SRDEs) driven by multiplicative space-time noise on bounded domains.
\par\vspace{3mm}

\noindent\textbf{Keywords:} Stochastic reaction-diffusion equations;  space-time white noise; strong Feller property; invariant measures; exponential mixing; irreducibility; unbounded domains.
\vskip 0.3cm
\noindent
	{\bf AMS Subject Classification:} Primary 60H15;  Secondary 35R60.
\end{abstract}
\tableofcontents
\section{Introduction}
In this paper, we consider the stochastic reaction-diffusion equation on the whole space:
\begin{align}\label{3.1}
	\left\{
	\begin{aligned}
		d u(t,x) &=\frac{1}{2}\partial_{xx} u(t,x)dt +b(u(t,x))dt + \sigma(u(t,x)) W(dt,dx),\quad t\geq 0, \ x\in \mathbb{R},\\
		u(0,x)&=u_0(x), \quad x\in \mathbb{R},
	\end{aligned}
	\right.
\end{align}
where the initial value $u_0$ is a deterministic function and $ W(dt,dx)$ is a space-time white noise on a probability space $(\Omega, {\cal F}, \{{\cal F}_t\}_{t\geq 0}, \mathbb{P})$. Here $\{{\cal F}_t\}_{t\geq 0}$
is the filtration satisfying the usual conditions generated by the white noise $W$. The coefficients
$b(\cdot), \sigma(\cdot): \mathbb{R}\rightarrow \mathbb{R}$ are deterministic
measurable functions.  

There is a great amount of literature devoted to the study of stochastic reaction-diffusion equations (SRDEs). Numerous works address well-posedness, asymptotic behaviour, ergodicity, and related topics; see \cite{CE-1,COTX26,PG-2,PG-3,DKZ,KKMS23,M93} and references therein for the case of SRDEs on bounded domains. 
For the case of SRDEs on unbounded domains, related problems have also been studied in \cite{CE24,CFHS25,CKNP22,FKN24,FKN26,GL20,JO26,S25,ST24,SZ2}.

In this paper,  we establish the ergodicity, exponential mixing as well as irreducibility of stochastic reaction-diffusion equations on unbounded domains. Ergodicity concerns the long-time behaviour of stochastic systems and is one of the most important research topics in the study of stochastic partial differential equations(SPDEs). Irreducibility plays very important roles in the study of large deviations, recurrence properties, strong maximal principles of the generators  and the ergodicity of SPDEs.
Ergodicity and irreducibility for stochastic evolution equations and stochastic partial differential equations (SPDEs) have been studied extensively. For instance, with the irreducibility in hand, one can obtain the ergodicity by proving the strong Feller property (see \cite{PG-3,PZ,Z}), or the asymptotic strong Feller (see \cite{HM}), or the $e$-property (see \cite{KPS,KSS}). 

For stochastic reaction-diffusion equations driven by space-time noise on bounded domains, the existence and uniqueness of invariant measures are established through proving the strong Feller property and irreducibility (see \cite{CE-1,PG-1,PG-3}). However, there exist no results on the exponential mixing partly due to the difficulty of utilizing the dissipativity conditions,  because of the lack of Ito formula/energy equalities.

On unbounded domains, several results are known for the existence of invariant measures; see \cite{AM03,CE24,EH01,MSY,TZ98}. 
However, so far there are no results on the uniqueness of invariant measures partly due to the fact that the strong Feller property collapses in this setting; see the result below. Furthermore, there exist no results at all on the exponential mixing of stochastic reaction-diffusion equations driven by multiplicative space-time noise even with strongly dissipative coefficients, partly because the solution of the equation is not a semi-martingale and the important tool, namely the It\^{o} formula, is not available. 
New ideas are therefore needed for SRDEs on unbounded domains. As far as we are aware of, this is the first paper to establish the ergodicity, exponential mixing and irreducibility  of stochastic reaction-diffusion equations driven by multiplicative space-time noise on unbounded domains. The results on exponential mixing are also new for (SRDEs) driven by multiplicative space-time noise on bounded domains. We also note that our proofs of exponential contractivity and exponential mixing can be adapted also to the weighted $L^p_{\lambda}$ space and to the Wasserstein metric $W_p$, for any $p\geq 1$.
\vskip 0.3cm
We now summarize the contributions of the paper.
\begin{itemize}
\item[I.]Non-strong Feller property: we show that stochastic reaction-diffusion equations on unbounded domains are not strong Feller, in contrast to the case of bounded domains.
\item[II.]Exponential contractivity: we obtain the exponential contractivity of solutions of SRDEs driven by space-time white noise with respect to the initial values under dissipativity assumptions, which could include coefficients with polynomial/exponential growth. 
Because It\^{o} formulae and energy equalities are not available, 
the existing methods in the literature fail, and it is tricky to make use of the dissipativity condition.  We overcome this shortcoming by finding a new way to fully exploit the comparison principles for stochastic partial differential  equations.
The results are also new for SRDEs driven by space-time white noise on bounded domains.

\item[III.]Exponential mixing:  we establish the exponential mixing property of stochastic reaction-diffusion equations driven by space-time white noise on unbounded domains, which could include coefficients with polynomial growth.  Here again, comparison principles of SRDEs play vital roles due to the lack of energy equations.

\item[IV.] Irreducibility: we establish the irreducibility for stochastic reaction-diffusion equations driven by space-time white noise on unbounded domains. To this end, we construct a specially designed controlled equation and find the appropriate controls. The Girsanov theorem for space-time white noise  plays an important role. The obstacle we need to overcome is that the control must belong to the Cameron-Martin space of the Brownian sheet,  whereas the solution itself lives only in a weighted $L^2_{\lambda}$ space. 
\end{itemize}
\vskip 0.5cm

The rest of the paper is organized as follows. In Section 2, we recall the precise framework and introduce the hypotheses. In Section 3, we show that stochastic reaction-diffusion equations(SRDEs) on unbounded domains are not strong Feller.
In Section 4, we prove the exponential contractivity of the solutions of SRDEs. In Section 5, we provide results on exponential mixing of SRDEs driven by space-time white noise on unbounded domains. In Section 6, we establish the irreducibility for SRDEs driven by space-time white noise on unbounded domains.

\vskip 0.3cm

\noindent\textbf{Convention on constants.} Throughout the paper, $C$ denotes a generic positive constant whose value may change from line to line. Other constants are denoted by $C_1$, $C_2$, etc. Their precise values are not important. Dependence on parameters will be indicated when needed, for example by $C_T$, $C_p$.
\newpage

\section{Framework}
In this section, we recall the definition of solutions and introduce the hypotheses.
\begin{definition}
We say that an adapted,
continuous random field $\{u(t,x): (t,x)\in [0,\infty)\times \mathbb{R}\}$ is a solution to the stochastic reaction-diffusion equation (SRDE) (\ref{3.1}) if, for every 
$\phi \in C_0^2(\mathbb{R})$ and $t\geq 0$,
\begin{align*}
	&\int_{\mathbb{R}} u(t,x)\phi(x) dx=\int_{\mathbb{R}} u_0(x)\phi(x) dx
	+\frac{1}{2}\int_{0}^{t} ds\int_{\mathbb{R}} u(s,x)\phi^{\prime\prime}(x)dx\nonumber\\
	&+\int_{0}^{t}ds\int_{\mathbb{R}} b(u(s,x))\phi(x)dx+ \int_{0}^{t}\int_{\mathbb{R}}\sigma(u(s,x))\phi(x)W(
	ds,dx),\quad \mathbb{P}\text{-a.s.}
\end{align*}
\end{definition}

It was shown in \cite{Wa} that $u$ is a solution to SRDE (\ref{3.1}) if and only if $u$ satisfies the following integral equation:
\begin{align}\label{2.1}
	u(t,x)=&P_tu_0(x)+\int_{0}^{t}\int_{\mathbb{R}} p_{t-s}(x,y)b(u(s,y))dyds\nonumber\\
	&+ \int_{0}^{t}\int_{\mathbb{R}} p_{t-s}(x,y)\sigma(u(s,y))W(ds,dy),\quad  \mathbb{P}\text{-a.s.}
\end{align}
where 
\begin{align}\label{260603.1931}
  p_{t}(x,y):=\frac{1}{\sqrt{2\pi t}} e^{-\frac{(x-y)^2}{2t}}, \quad P_t f(x) := \int_{\mathbb{R}} p_{t}(x,y) f(y)dy, \quad x,y\in\mathbb{R}.
\end{align}

%
%

\vskip 0.6cm
For $p\geq 1$ and $\lambda>0$, we introduce the weighted $L^p$ space:
\begin{align*}
  L^p_{\lambda} := \left\{f: \mathbb{R} \rightarrow \mathbb{R} \text{ measurable: } \int_{\mathbb{R}} |f(x)|^p e^{-\lambda|x|} dx <\infty  \right\}.
\end{align*}
Throughout the paper, we use the following heat kernel estimates
\begin{gather}
\label{260603.2012}  \int_{\mathbb{R}} p_t(x,y)^2 dy = \frac{1}{2\sqrt{\pi t}}, \quad \forall\  x\in\mathbb{R},\\
\label{260207.1152}
  \int_{\mathbb{R}} p_t(x,y) e^{-\lambda |x|} dx \leq e^{\frac{\lambda^2}{2}t} e^{-\lambda |y|}, \quad \forall\  y\in\mathbb{R},\\
  \label{260207.2020}
  \int_{\mathbb{R}} p_t(x,y)^2 e^{-\lambda |x|} dx \leq \frac{1}{2\sqrt{\pi t}}e^{\frac{\lambda^2}{4}t} e^{-\lambda |y|}, \quad \forall\  y\in\mathbb{R}.
\end{gather}
By H\"{o}lder's inequality and Fubini's theorem, we deduce from (\ref{260207.1152}) that
\begin{align}\label{260207.1918}
  \int_{\mathbb{R}} |P_t f(x)|^p e^{-\lambda |x|}dx & = \int_{\mathbb{R}} \left|\int_{\mathbb{R}} p_t (x,y) f(y) dy\right|^p e^{-\lambda |x|}dx \nonumber\\
  & \leq \int_{\mathbb{R}} \left(\int_{\mathbb{R}} p_t (x,y) |f(y)|^p dy \right) e^{-\lambda |x|}dx \nonumber\\
  & \leq e^{\frac{\lambda^2}{2}t} \int_{\mathbb{R}} |f(y)|^p e^{-\lambda |y|} dy .
\end{align}
In other words, for any $p\geq 1$,
\begin{align}\label{260208.1019}
  \Vert P_t f\Vert_{L^p_{\lambda}} \leq e^{\frac{\lambda^2}{2p}t} \Vert f\Vert_{L^p_{\lambda}}.
\end{align}
\vskip 0.4cm
We shall use the following  hypotheses in various places in the paper.
\vskip 0.4cm
\noindent {\bf (H1)} $b$ is Lipschitz, i.e.,  there exists a constant $L_{b}$ such that
\begin{align*}
|b(x) - b(y)| \leq L_{b} |x-y|, \quad\forall\  x,y\in\mathbb{R}.
\end{align*}
\noindent {\bf (H2)} $\sigma$ is Lipschitz, i.e.,  there exists a constant $L_{\sigma}$ such that
\begin{align*}
|\sigma(x) - \sigma(y)| \leq L_{\sigma} |x-y|, \quad\forall\  x,y\in\mathbb{R}.
\end{align*}
\noindent {\bf (H3)} Non-degeneracy: there exists a constant $c>0$ such that
\begin{align*}
|\sigma(x)|\geq c, \quad\forall\  x\in\mathbb{R}.
\end{align*}
\noindent {\bf (H4)} $b$ is dissipative, i.e. for some $\alpha>0$,
\begin{align*}
  (b(x) - b(y))(x-y) \leq -\alpha (x-y)^2, \quad\forall\  x,y\in\mathbb{R}.
\end{align*}

\vskip 0.6cm

\section{Non-strong Feller property}

Generally, solutions to stochastic reaction-diffusion on $\mathbb{R}$ are not strong Feller. This is true even in the case of additive-noise.
\begin{theorem}
  Let $u$ be the solution to the following equation driven by space-time white noise $W$:
\begin{align}\label{260209.1606}
\begin{cases}
    d u(t,x) = \frac{1}{2}\partial_{xx} u(t,x) dt  + W(dt,dx), \quad x\in\mathbb{R},\\
    u(0,x) = f(x), \quad x\in\mathbb{R},
\end{cases}
\end{align}
where $f\in L^p_{\lambda}$ for some $p\geq 1$. 
Then the solution $\{u(t): t\geq 0\}$ is not strong Feller on $L^p_{\lambda}$.
\end{theorem}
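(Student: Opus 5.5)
The solution is explicit: $u(t)=P_tf+Z_t$, where
\[
Z_t(x)=\int_0^t\int_{\mathbb{R}}p_{t-s}(x,y)\,W(ds,dy)
\]
is a centered Gaussian field that does not depend on $f$. So the law of $u(t)$ started from $f$ is $\mu_t\circ\tau_{P_tf}^{-1}$, where $\mu_t$ is the law of $Z_t$ on $L^p_\lambda$ and $\tau_h g=g+h$. Strong Feller would mean that $f\mapsto \mathbb{E}\,\varphi(P_tf+Z_t)$ is continuous for every bounded measurable $\varphi$. That in turn forces the total variation distance $\|\mu_t(\cdot-P_tf_n)-\mu_t(\cdot-P_tf)\|_{TV}\to0$ whenever $f_n\to f$ in $L^p_\lambda$; this is the standard equivalence, since sup over $|\varphi|\le1$ of a family of continuous functions on a metric space can be handled by a diagonal argument, or one uses that a strong Feller kernel is TV-continuous after composing with a Feller step. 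It is simpler, though, to produce directly a single indicator $\varphi=1_A$ that fails continuity. My plan is to use Feldman–Hájek: the translate $\mu_t(\cdot-h)$ is either equivalent to $\mu_t$ (when $h$ lies in the Cameron–Martin space $H_t$ of $\mu_t$) or mutually singular with it. I would exhibit $f_n\to0$ in $L^p_\lambda$ with $h_n=P_tf_n\notin H_t$. Then for each $n$ there is a Borel set $A_n$ with $\mu_t(A_n)=1$ and $\mu_t(A_n-h_n)=0$.

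The key step is to identify $H_t$, or at least a necessary condition for membership. The covariance of $Z_t$ is $Q_t=\int_0^tP_{2s}\,ds$ formally (as an operator on $L^2(\mathbb{R})$), and $H_t=Q_t^{1/2}L^2(\mathbb{R})\subset L^2(\mathbb{R})$. Equivalently, the Cameron–Martin space of $Z_t$ consists of the functions $\int_0^tP_{t-s}g(s)\,ds$ with $g\in L^2([0,t]\times\mathbb{R})$. By Cauchy–Schwarz and (\ref{260603.2012}) these functions are in $L^2(\mathbb{R})$ with norm at most $\sqrt t\,\|g\|$; in fact $\|P_{t-s}g(s)\|_{L^2}\le\|g(s)\|_{L^2}$ suffices. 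So every element of $H_t$ lies in $L^2(\mathbb{R})$, which requires decay at infinity. Now take $f_n\equiv 1/n$, a constant. Then $f_n\to0$ in $L^p_\lambda$ because $\|1\|_{L^p_\lambda}<\infty$, and $P_tf_n=1/n\notin L^2(\mathbb{R})$, hence $h_n\notin H_t$. Thus the laws of $u(t)$ started at $f_n$ and at $0$ are mutually singular for every $n$.

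To conclude, I need one bounded measurable $\varphi$ with $P_t\varphi(f_n)\not\to P_t\varphi(0)$. Singularity for each $n$ gives sets $A_n$ depending on $n$, so I would instead build one set directly. Consider the functional
\[
\ell(g)=\lim_{R\to\infty}\frac{1}{2R}\int_{-R}^{R}g(x)\,dx
\]
(limsup to make it measurable). For $g=Z_t$, stationarity in $x$ and the ergodic theorem imply that this spatial average is $0$ a.s. The covariance of $Z_t(x)$ is integrable in the lag, so the variance of the average is $O(1/R)$; convergence along $R=k^2$ by Borel–Cantelli plus the continuity of $Z_t$ is enough, or one just uses $\limsup$ of averages over $[-k^2,k^2]$. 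For $g=c+Z_t$ the limit is $c$. Let $A=\{g:\ell(g)=0\}$. Then $P_t1_A(0)=1$ while $P_t1_A(f_n)=0$ for all $n$, contradicting continuity at $0$. This argument makes Feldman–Hájek unnecessary and works for any $p\ge1$ and $t>0$.

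The main obstacle is the rigorous justification that $\ell(Z_t)=0$ almost surely, and that $\ell$ (defined through a limsup along a sequence) is Borel on $L^p_\lambda$. For the first, I would compute $\mathrm{Cov}(Z_t(x),Z_t(y))=\int_0^tp_{2s}(x-y)\,ds\le C_t\,e^{-(x-y)^2/(4t)}$ up to a polynomial factor. This gives $\mathrm{Var}\bigl(\frac1{2R}\int_{-R}^RZ_t\bigr)\le C_t/R$, and then Chebyshev and Borel–Cantelli along $R_k=k^2$ give almost sure convergence to $0$. For measurability, $g\mapsto\int_{-R}^Rg\,dx$ is continuous on $L^p_\lambda$ for each fixed $R$, so the limsup along $R_k$ is Borel.
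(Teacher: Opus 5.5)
Your final argument is correct, and it takes a different route from the paper's.

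The paper's test set is the smaller weighted space $L^2_{\mu}$, $0<\mu<\lambda$. This set is dense in $L^2_{\lambda}$ and preserved by the dynamics, because the stochastic convolution always lies in $L^2_{\mu}$. So strong Feller would force $T_t\mathbf{1}_{L^2_{\mu}}\equiv 1$. That fails for the growing datum $f=e^{\lambda_0|x|}$ with $\mu\le 2\lambda_0<\lambda$, since then $P_tf\notin L^2_{\mu}$. The paper therefore detects the shift $P_tf$ through its growth at infinity. It needs only a qualitative moment bound for $Z_t$ in $L^2_{\mu}$ plus density, and no correlation estimate.

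You detect the shift through its spatial mean instead. Your $\ell$ is Borel on $L^p_{\lambda}$, and the bound $\mathrm{Var}\bigl(\frac{1}{2R}\int_{-R}^{R}Z_t\,dx\bigr)=\frac{1}{4R^2}\int_{-R}^{R}\int_{-R}^{R}\int_0^t p_{2s}(x,x')\,ds\,dx\,dx'\le\frac{t}{2R}$, with Chebyshev and Borel--Cantelli along $R_k=k^2$, gives $\ell(Z_t)=0$ a.s. Hence $T_t\mathbf{1}_A(1/n)=0\neq 1=T_t\mathbf{1}_A(0)$.

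Your route costs a short decorrelation estimate for $Z_t$, but it buys a little more:
\begin{itemize}
\item The discontinuity appears at $f=0$ along constant, hence bounded and smooth, perturbations. The paper's test set cannot see this, because $L^2_{\mu}$ contains every bounded function.
\item A single set $A$ works for every $p\ge 1$ and every $t>0$ at once. The paper treats $p=2$ and calls the general case similar.
\end{itemize}

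Two side remarks. First, your opening claim that strong Feller is equivalent to total-variation continuity of $f\mapsto\mathcal{L}(u^f(t))$ is not right. Strong Feller is exactly setwise continuity, which is strictly weaker in general, and a diagonal argument does not close that gap. Second, the Feldman--H\'ajek discussion is correct (constants lie outside the Cameron--Martin space, which sits inside $L^2(\mathbb{R})$), but your explicit set $A$ supersedes it. Since neither point is used in your final argument, you can simply delete both.
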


\begin{proof}
We prove only the case $p=2$; the proof for general $p\geq 1$ is similar.
For any $0<\mu<\lambda$, the space $L^2_{\mu}$ is in dense in $L^2_{\lambda}$.
If the initial value $f\in L^2_{\mu}$, then the solution $u^f \in C([0,\infty), L^2_{\mu})$ almost surely. Hence, the semigroup $T_t$ generated by the solution satisfies
\begin{align*}
  T_t \mathbf{1}_{L^2_{\mu}}(f) = \mathbb{P}(u^f(t)\in L^2_{\mu}) =1 .
\end{align*}
So if the solution $\{u(t): t\geq 0\}$ were strong Feller on $L^2_{\lambda}$, then we would have
\begin{align}\label{260208.1445}
   T_t \mathbf{1}_{L^2_{\mu}}(f) = 1, \quad \forall\  f\in L^2_{\lambda}.
\end{align}
However, we now show that there exists $f\in L^2_{\lambda} \setminus L^2_{\mu}$ such that
\begin{align}\label{260208.1448}
   T_t \mathbf{1}_{L^2_{\mu}}(f) = 0.
\end{align}
This contradicts (\ref{260208.1445}). Hence the solution $\{u(t): t\geq 0\}$ is not strong Feller on $L^2_{\lambda}$.

Take $f(x)=e^{\lambda_0 |x|}$, where $\lambda_0$ satisfies $\mu\leq 2\lambda_0 <\lambda$. Then $f\in L^2_{\lambda} \setminus L^2_{\mu}$. It remains to prove (\ref{260208.1448}). The solution to (\ref{260209.1606}) is
\begin{align*}
  u^f(t,x)  = P_t f(x) + \int_0^t\int_{\mathbb{R}} p_{t-s}(x,y) W(ds,dy).
\end{align*}
By the It\^{o} isometry, we see that
\begin{align*}
  \mathbb{E}\int_{\mathbb{R}} \left|\int_0^t\int_{\mathbb{R}} p_{t-s}(x,y) W(ds,dy)\right|^2 e^{-\mu|x|} dx = \mathbb{E}\int_0^t\int_{\mathbb{R}}\int_{\mathbb{R}}  p_{t-s}(x,y)^2  e^{-\mu|x|} dsdydx <\infty.
\end{align*}
However,  $P_t f\notin L^2_{\mu}$, since
\begin{align*}
  P_t f(x) = \mathbb{E}[e^{\lambda_0 |B_t + x|}] \geq \mathbb{E}[e^{\lambda_0 (|x| - |B_t|)}] = C_t e^{\lambda_0 |x|}.
\end{align*}
Thus, for every $t\geq 0$, $u^f(t) \notin L^2_{\mu}$ almost surely, which proves (\ref{260208.1448}).
\end{proof}
\section{Exponential contractivity}
Recall the hypotheses:
\vskip 0.4cm
\noindent {\bf (H2)} $\sigma$ is Lipschitz, i.e.,  there exists a constant $L_{\sigma}$ such that
\begin{align*}
|\sigma(x) - \sigma(y)| \leq L_{\sigma} |x-y|, \quad\forall\  x,y\in\mathbb{R}.
\end{align*}
\noindent {\bf (H4)} $b$ is dissipative, i.e. for some $\alpha>0$,
\begin{align*}
  (b(x) - b(y))(x-y) \leq -\alpha (x-y)^2, \quad\forall\  x,y\in\mathbb{R}.
\end{align*}

In this section, we assume that (\ref{3.1}) admits a unique solution. We show that the solutions to (\ref{3.1}) are exponentially contracting when the drift is  dissipative. We stress that the It\^{o} formula/energy equality is not available for 
such stochastic reaction-diffusion equations driven by space-time white noise. It is tricky to make use of the dissipativity condition (H4).
\begin{theorem}\label{260206.2029}
Let $u^i(t,\cdot)$ denotes the solution to euqation (\ref{3.1}) with initial value $u_0^i$, $i=1,2$. 
\begin{itemize}
  \item [(i)] Suppose that (H4) holds. 
Then for any $\lambda, t\geq 0$,
\begin{align}\label{260207.1931}
    \mathbb{E} \Vert u^2(t) - u^1(t)\Vert_{L^1_{\lambda}} \leq 2 e^{-(\alpha -\frac{\lambda^2}{2}) t} \mathbb{E}\Vert u_0^2 - u_0^1 \Vert_{L^1_{\lambda}} .
\end{align}
  \item [(ii)] Suppose that (H2) and (H4) hold. Assume that for some $\lambda\geq 0$,  $\mathbb{E}\int_{\mathbb{R}}|u^2(t,x) - u^1(t,x)|^2 e^{-\lambda|x|} dx <\infty$ for any $t\geq 0$. 
  If $\alpha>\frac{L_{\sigma}^4}{8}$ and $\kappa\geq 0$ satisfy
\begin{align}\label{260401.1108}
\kappa + \frac{\lambda^2}{8} < \alpha - \frac{L_{\sigma}^4}{8} \quad \text{ and } \quad   \kappa + \frac{\lambda^2}{4}\leq \alpha,
\end{align}
then there exists a constant $C_{\alpha,\lambda,\kappa}>0$ such that for any $t\geq 0$,
\begin{align}\label{260208.1116}
\left(\mathbb{E} \int_{\mathbb{R}} |u^2(t,x) - u^1(t,x)|^2 e^{-\lambda |x|} dx\right)^{\frac{1}{2}} \leq 2 C_{\alpha,\lambda,\kappa} e^{-\kappa t}\left(\mathbb{E}\Vert  u^2_0 - u^1_0 \Vert_{L^2_{\lambda}}^2\right)^{\frac{1}{2}}.
\end{align}
\end{itemize}
\end{theorem}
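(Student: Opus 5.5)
The plan is to reduce everything to \emph{ordered} pairs of solutions by the comparison principle, and then use the dissipativity (H4) only on a difference of known sign. Let $u^3$ and $u^4$ be the solutions of (\ref{3.1}) with initial values $u_0^1\vee u_0^2$ and $u_0^1\wedge u_0^2$. By the comparison principle for SRDEs (assumed available under the standing well-posedness assumption), $u^4\le u^1,u^2\le u^3$ a.s. Hence
\[
|u^2-u^1|\le (u^3-u^1)+(u^1-u^4),
\]
and each summand is a nonnegative difference of two ordered solutions whose initial datum is bounded by $|u_0^2-u_0^1|$. This splitting is where the factor $2$ in (\ref{260207.1931}) and (\ref{260208.1116}) comes from. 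So it suffices to treat $v:=\bar u-\underline u\ge 0$ for two ordered solutions $\underline u\le\bar u$ with $v_0\ge 0$.

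\emph{Rewriting the mild form.} I will rewrite the equation for $v$ with the shifted semigroup $e^{-\alpha t}P_t$, that is, add and subtract $\alpha v$:
\[
v(t,x)=e^{-\alpha t}P_tv_0(x)+\int_0^t\!\!\int_{\mathbb{R}} e^{-\alpha(t-s)}p_{t-s}(x,y)\,R(s,y)\,dy\,ds+\int_0^t\!\!\int_{\mathbb{R}} e^{-\alpha(t-s)}p_{t-s}(x,y)\,\Sigma(s,y)\,W(ds,dy),
\]
where $R=b(\bar u)-b(\underline u)+\alpha v$ and $\Sigma=\sigma(\bar u)-\sigma(\underline u)$. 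The equivalence of this form with (\ref{2.1}) is the standard variation-of-constants identity; I would justify it through the weak formulation, localizing or truncating $b$ if integrability is an issue. Since $v\ge0$, (H4) gives $R\le 0$.

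\emph{Part (i).} Take expectations in the rewritten mild form. The stochastic integral has mean zero, and $R\le 0$, so
\[
\mathbb{E}v(t,x)\le e^{-\alpha t}P_t\,\mathbb{E}v_0(x).
\]
Integrating against $e^{-\lambda|x|}$ and using (\ref{260207.1152}) gives $\mathbb{E}\|v(t)\|_{L^1_\lambda}\le e^{-(\alpha-\lambda^2/2)t}\,\mathbb{E}\|v_0\|_{L^1_\lambda}$. Summing the two pieces of the splitting yields (\ref{260207.1931}).

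\emph{Part (ii).} Again $R\le0$ and $v\ge0$, so
\[
0\le v(t,x)\le e^{-\alpha t}P_tv_0(x)+M(t,x),
\]
with $M$ the stochastic term. Squaring, taking expectations, applying the It\^o isometry and (H2), and using $(P_tv_0)^2\le P_t(v_0^2)$, I get
\[
\mathbb{E}v(t,x)^2\le C\,e^{-2\alpha t}P_t\big(\mathbb{E}v_0^2\big)(x)+C'L_\sigma^2\int_0^t\!\!\int_{\mathbb{R}} e^{-2\alpha(t-s)}p_{t-s}(x,y)^2\,\mathbb{E}v(s,y)^2\,dy\,ds .
\]
I would try to get $C'=1$ by an $\epsilon$-weighted Young inequality rather than $(a+b)^2\le 2a^2+2b^2$. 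Next I multiply by $e^{-\lambda|x|}$, integrate in $x$, and use (\ref{260207.1152}) and (\ref{260207.2020}). For $g(t)=\mathbb{E}\|v(t)\|^2_{L^2_\lambda}$ this gives the singular Volterra inequality
\[
g(t)\le C e^{-(2\alpha-\lambda^2/2)t}g(0)+C'L_\sigma^2\int_0^t \frac{e^{-(2\alpha-\lambda^2/4)(t-s)}}{2\sqrt{\pi(t-s)}}\,g(s)\,ds .
\]
With $h(t)=e^{2\kappa t}g(t)$, the forcing term stays bounded by $Cg(0)$ precisely because $\kappa+\lambda^2/4\le\alpha$. The kernel becomes $k(r)=C'L_\sigma^2(4\pi r)^{-1/2}e^{-ar}$ with $a=2(\alpha-\kappa-\lambda^2/8)$, whose total mass is $C'L_\sigma^2/(2\sqrt a)$. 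When this mass is $<1$, which is exactly the first condition in (\ref{260401.1108}) once the constants are tracked sharply, the renewal/Neumann-series argument gives $\sup_t h(t)\le C_{\alpha,\lambda,\kappa}^2\,g(0)$. The finiteness hypothesis on the second moment is used here to justify iterating the inequality. Combining the two halves of the splitting gives (\ref{260208.1116}).

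The main obstacle is the squaring step in (ii). One must keep the constant in front of the noise term sharp enough to reach the threshold $\alpha>L_\sigma^4/8$. One must also justify rigorously the shifted mild form and the use of $R\le0$ when $b$ has superlinear growth, which requires a truncation or localization argument.
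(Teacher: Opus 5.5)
Your proposal is correct and is essentially the paper's proof: the comparison principle reduces everything to ordered pairs (the paper uses only the solution started from $u_0^1\vee u_0^2$ plus the triangle inequality, which also produces the factor $2$), the shifted semigroup $e^{-\alpha t}P_t$ with $u\mapsto b(u)+\alpha u$ nonincreasing lets you drop the drift difference, taking expectations gives (i), and for (ii) a time-weighted supremum of the second moment closes under exactly the first condition in (\ref{260401.1108}). The squaring step you single out as the main obstacle is avoided in the paper by taking $L^2(\Omega)$-norms of $0\le v\le e^{-\alpha t}P_tv_0+M$ and applying Minkowski's inequality, which puts coefficient exactly $1$ on the noise term; your $\epsilon$-Young variant also works because that condition is strict, and in fact the cross term $\mathbb{E}\big[e^{-\alpha t}P_tv_0(x)\,M(t,x)\big]$ vanishes when $v_0$ is measurable at the initial time.
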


\begin{proof}

The proof is divided into two steps.

\textbf{Step 1.} We prove Theorem \ref{260206.2029} under the restriction that $u_0^2(x)\geq u^1_0(x)$ for all $x\in \mathbb{R}$. 

In this case, the comparison theorem gives $u^2(t,x)\geq u^1(t,x)$ for any $(t,x)\in\mathbb{R}_+\times\mathbb{R}$. 
The semigroup generated by $\frac{1}{2}\partial_{xx} - \alpha I$ is $e^{-\alpha t}P_t$, where $P_t$ is defined in (\ref{260603.1931}). 
Hence, the solution to (\ref{3.1}) admits the following mild form:
\begin{align*}
  u^i(t,x) = & e^{-\alpha t} P_t u_0^i(x) + \int_{0}^{t} \int_{\mathbb{R}} e^{-\alpha (t-s)} p_{t-s}(x,y)b_{\alpha}(u^i(s,y)) dsdy \nonumber\\
  & + \int_{0}^{t} \int_{\mathbb{R}} e^{-\alpha (t-s)} p_{t-s}(x,y)\sigma (u^i(s,y)) W(ds,dy),
\end{align*}
where $b_{\alpha}(u)=b(u)+\alpha u$. The crucial observation is that (H4) holds if and only if $b_{\alpha}(u)$ is decreasing. 
By subtraction, we have
\begin{align*}
  u^2(t,x) -u^1(t,x) = & e^{-\alpha t} P_t (u_0^2-u^1_0)(x) \nonumber\\
  & + \int_{0}^{t} \int_{\mathbb{R}} e^{-\alpha (t-s)} p_{t-s}(x,y)[b_{\alpha}(u^2(s,y)) - b_{\alpha}(u^1(s,y))] dsdy \nonumber\\
  & + \int_{0}^{t} \int_{\mathbb{R}} e^{-\alpha (t-s)} p_{t-s}(x,y)[\sigma (u^2(s,y)) - \sigma (u^1(s,y))]W(ds,dy).
\end{align*}
Since $u^2(s,y)\geq u^1(s,y)$ and $b_{\alpha}$ is decreasing, we deduce that
\begin{align}\label{260207.2002}
  u^2(t,x) - u^1(t,x) \leq & e^{-\alpha t} P_t (u_0^2-u^1_0)(x) \nonumber\\
  & + \int_{0}^{t} \int_{\mathbb{R}} e^{-\alpha (t-s)} p_{t-s}(x,y)[\sigma (u^2(s,y)) - \sigma (u^1(s,y))]W(ds,dy).
\end{align}
Taking expectations in the above inequality, we obtain
\begin{align*}
  \mathbb{E}|u^2(t,x) - u^1(t,x)| = \mathbb{E}[u^2(t,x) - u^1(t,x)] \leq e^{-\alpha t}\mathbb{E}[ P_t (u_0^2-u^1_0)(x)].
\end{align*}
Applying (\ref{260207.1918}) with $p=1$ and Fubini's theorem yields
\begin{align*}
   \mathbb{E}\Vert u^2(t) - u^1(t) \Vert_{L^1_{\lambda}} \leq e^{-(\alpha -\frac{\lambda^2}{2}) t} \mathbb{E}\Vert u_0^2 - u_0^1 \Vert_{L^1_{\lambda}} .
\end{align*}
This proves (\ref{260207.1931}) in the case $u_0^2\geq u^1_0$.

Next, we prove (\ref{260208.1116}) under the condition $u_0^2\geq u^1_0$. 
Taking the $L^2(\Omega)$-norm on both sides of (\ref{260207.2002}), we obtain 
\begin{align*}
  & \Vert u^2(t,x) -u^1(t,x) \Vert_{L^2(\Omega)} \nonumber\\
   \leq & e^{-\alpha t} \Vert P_t(u^2_0 - u^1_0)(x) \Vert_{L^2(\Omega)} + \left\Vert \int_{0}^{t}\int_{\mathbb{R}} e^{-\alpha (t-s)} p_{t-s}(x,y)[\sigma (u^2(s,y)) - \sigma (u^1(s,y))]W(ds,dy) \right\Vert_{L^2(\Omega)} \\
  \leq & e^{-\alpha t} \Vert P_t(u^2_0 - u^1_0)(x) \Vert_{L^2(\Omega)} + \left\{ \mathbb{E}\int_{0}^{t}\int_{\mathbb{R}} e^{-2\alpha (t-s)} p_{t-s}(x,y)^2 L_{\sigma}^2 | u^2(s,y) -  u^1(s,y)|^2 dsdy \right\}^{\frac{1}{2}} .
\end{align*}
We then take the $L^2_{\lambda}$-norm with respect to the spatial variable $x$ on both sides of the above inequality to get
\begin{equation*}
  \begin{aligned}
  & \left(\mathbb{E}\Vert u^2(t) - u^1(t)\Vert_{L^2_{\lambda}}^2\right)^{\frac{1}{2}} \leq  e^{-\alpha t} \left(\mathbb{E}\Vert P_t(u^2_0 - u^1_0)\Vert_{L^2_{\lambda}}^2\right)^{\frac{1}{2}} \\
 & + \left\{\mathbb{E}\int_{0}^{t}\int_{\mathbb{R}} \int_{\mathbb{R}} e^{-2\alpha (t-s)} p_{t-s}(x,y)^2 L_{\sigma}^2 \vert u^2(s,y) -  u^1(s,y)\vert^2 e^{-\lambda |x|} dsdydx \right\}^{\frac{1}{2}}.
\end{aligned}
\end{equation*}
By (\ref{260208.1019}), Fubini's theorem and (\ref{260207.2020}), we have
\begin{align}\label{260207.2034}
  & \left(\mathbb{E}\Vert u^2(t) - u^1(t)\Vert_{L^2_{\lambda}}^2\right)^{\frac{1}{2}} \leq  e^{-\alpha t} e^{\frac{\lambda^2 }{4}t}\left(\mathbb{E}\Vert u^2_0 - u^1_0\Vert_{L^2_{\lambda}}^2\right)^{\frac{1}{2}} \nonumber\\
 & + \left\{ \int_{0}^{t}  \frac{L_{\sigma}^2}{2\sqrt{\pi (t-s)}}e^{\frac{\lambda^2}{4}(t-s)}  e^{-2\alpha (t-s)} \mathbb{E}\int_{\mathbb{R}} \vert u^2(s,y) -  u^1(s,y)\vert^2 e^{-\lambda |y|} dyds \right\}^{\frac{1}{2}}.
\end{align}
For $\kappa\geq 0$, set
\[
\mathcal{N}_T^{\kappa}(u):= \sup_{0\leq t\leq T} \left[ e^{\kappa t} \left(\mathbb{E} \int_{\mathbb{R}} |u(t,x)|^2 e^{-\lambda |x|} dx\right)^{\frac{1}{2}} \right].
\]
It follows from (\ref{260207.2034}) that
\begin{align}\label{260208.1055}
  & \mathcal{N}_T^{\kappa}(u^2 -u^1) \nonumber\\
  \leq & \sup_{0\leq t\leq T} \left[e^{-(\alpha - \frac{\lambda^2}{4} -\kappa) t}\right] \left(\mathbb{E}\Vert  u^2_0 - u^1_0 \Vert_{L^2_{\lambda}}^2\right)^{\frac{1}{2}} \nonumber\\
  & + \sup_{0\leq t\leq T}\left\{\int_{0}^{t}  \frac{L_{\sigma}^2}{2\sqrt{\pi (t-s)}}e^{\frac{\lambda^2}{4}(t-s)}  e^{-2\alpha (t-s)} e^{2\kappa (t-s)} \mathcal{N}_T^{\kappa}(u^2 -u^1)^2 ds \right\}^{\frac{1}{2}} \nonumber\\
 \leq & \sup_{0\leq t\leq T} \left[e^{-(\alpha - \frac{\lambda^2}{4} -\kappa) t}\right] \left(\mathbb{E}\Vert  u^2_0 - u^1_0 \Vert_{L^2_{\lambda}}^2\right)^{\frac{1}{2}} \nonumber\\
  & + \mathcal{N}_T^{\kappa}(u^2 -u^1) \times \sup_{t\geq 0}\left\{ \frac{L_{\sigma}^2}{2\sqrt{\pi}}\int_{0}^{t}  \frac{1}{\sqrt{ (t-s)}}e^{- 2(\alpha-\kappa-\frac{\lambda^2}{8})(t-s)}  ds \right\}^{\frac{1}{2}} \nonumber\\
  \leq & \sup_{0\leq t\leq T} \left[e^{-(\alpha - \frac{\lambda^2}{4} -\kappa) t}\right] \left(\mathbb{E}\Vert  u^2_0 - u^1_0 \Vert_{L^2_{\lambda}}^2\right)^{\frac{1}{2}}  + \mathcal{N}_T^{\kappa}(u^2 -u^1) \times \left[8(\alpha-\frac{\lambda^2}{8}-\kappa)\right]^{-\frac{1}{4}} L_{\sigma}.
\end{align}
Since $\alpha> \frac{L_{\sigma}^4}{8}$ and (\ref{260401.1108}) hold, we have
\begin{align*}
\alpha  - \frac{\lambda^2}{4} -\kappa \geq 0 \quad\text{ and }\quad \left[8(\alpha-\frac{\lambda^2}{8}-\kappa)\right]^{-\frac{1}{4}} L_{\sigma} <1 .
\end{align*}
Hence (\ref{260208.1055}) implies that
\begin{align}\label{260605.1351}
  \mathcal{N}_T^{\kappa}(u^2 -u^1) \leq \left(\mathbb{E}\Vert  u^2_0 - u^1_0 \Vert_{L^2_{\lambda}}^2\right)^{1/2} +  \mathcal{N}_T^{\kappa}(u^2 -u^1) \left[8(\alpha-\frac{\lambda^2}{8}-\kappa)\right]^{-\frac{1}{4}} L_{\sigma}.
\end{align}
Squaring both sides of (\ref{260207.2034}), using the fact that $\mathbb{E}\Vert u^2(t) - u^1(t)\Vert_{L^2_{\lambda}}^2<\infty$ for any $t\geq 0$, and applying Gronwall's inequality, we find that $\mathcal{N}^{\kappa}_T(u^2 -u^1)<\infty$ for any $T>0$. Hence (\ref{260605.1351}) gives
\begin{align}
  \sup_{T\geq 0}\mathcal{N}_T^{\kappa}(u^2 -u^1) \leq  C_{\alpha,\kappa,\lambda} \left(\mathbb{E}\Vert  u^2_0 - u^1_0 \Vert_{L^2_{\lambda}}^2\right)^{\frac{1}{2}},
\end{align}
where 
\begin{align*}
    C_{\alpha,\lambda,\kappa}:= \left(1-\left[8\big(\alpha-\frac{\lambda^2}{8}-\kappa\big)\right]^{-\frac{1}{4}} L_{\sigma} \right)^{-1}.
\end{align*}
In particular, we obtain
\begin{align*}
\left(\mathbb{E} \int_{\mathbb{R}} |u^2(t,x) - u^1(t,x)|^2 e^{-\lambda |x|} dx\right)^{\frac{1}{2}} \leq C_{\alpha,\lambda,\kappa} e^{-\kappa t}\left(\mathbb{E}\Vert  u^2_0 - u^1_0 \Vert_{L^2_{\lambda}}^2\right)^{1/2}.
\end{align*}
This proves (\ref{260208.1116}) in the case of $u_0^2\geq u^1_0$.

\vskip 0.6cm

\textbf{Step 2.} We remove the restriction $u_0^2(x)\geq u_0^1(x)$, $x\in \mathbb{R}$.

Let $u^{(1,2)}(t,x)$ denote the solution to equation (\ref{3.1}) with initial value $u_0^1(x)\vee u_0^2(x)$. Note that the constants appearing in (\ref{260207.1931}) and (\ref{260208.1116}) are independent of the initial values. Hence, for $i=1,2$,
\begin{align*}
  \left(\mathbb{E} \int_{\mathbb{R}} |u^i(t,x) - u^{(1,2)}(t,x)|^2 e^{-\lambda |x|} dx\right)^{\frac{1}{2}} \leq C_{\alpha,\lambda,\kappa} e^{-\kappa t}\left(\mathbb{E}\Vert  u^i_0 - u^1_0\vee u^2_0 \Vert_{L^2_{\lambda}}^2\right)^{\frac{1}{2}}.
\end{align*}
Note that for $i=1,2$,
\begin{align*}
  |u_0^i(x) -(u_0^1\vee u_0^2)(x)| \leq |u_0^2(x) - u_0^1(x)|.
\end{align*}
Hence, by the triangle inequality we obtain
\begin{align*}
    \left(\mathbb{E} \int_{\mathbb{R}} |u^2(t,x) - u^{1}(t,x)|^2 e^{-\lambda |x|} dx\right)^{\frac{1}{2}} \leq 2 C_{\alpha,\lambda,\kappa} e^{-\kappa t}\left(\mathbb{E}\Vert  u^2_0 - u^1_0 \Vert_{L^2_{\lambda}}^2\right)^{\frac{1}{2}}.
\end{align*}
This completes the proof of (\ref{260208.1116}). The proof of  (\ref{260207.1931}) in this case is similar, and we omit the details.
\end{proof}

\section{Exponential mixing}
In this section, we first prove an abstract exponential mixing theorem for SRDEs. We then give concrete conditions under which the hypotheses of the abstract theorem are fulfilled.  
Recall the hypotheses:
\vskip 0.4cm
\noindent {\bf (H2)} $\sigma$ is Lipschitz, i.e.,  there exists a constant $L_{\sigma}$ such that
\begin{align*}
|\sigma(x) - \sigma(y)| \leq L_{\sigma} |x-y|, \quad\forall\  x,y\in\mathbb{R}.
\end{align*}
\noindent {\bf (H4)} $b$ is dissipative, i.e. for some $\alpha>0$,
\begin{align*}
  (b(x) - b(y))(x-y) \leq -\alpha (x-y)^2, \quad\forall\  x,y\in\mathbb{R}.
\end{align*}

Throughout this section, unless otherwise stated, we assume by default that (\ref{3.1}) admits a unique solution. 
\begin{theorem}\label{260209.1719}
Let $u$ be the solution of equation (\ref{3.1}) with initial value $f$. 
\begin{itemize}
  \item [(i)] Suppose that (H4) holds. If $\alpha > 0$, and for some $\lambda>0$ satisfying $\frac{\lambda^2}{2}<\alpha$, there exists $f\in L^1_{\lambda}$ such that
\begin{align}\label{260209.1121-1}
  \sup_{t\geq 0}\mathbb{E} \Vert u(t)\Vert_{L^1_{\lambda}} <\infty ,
\end{align}
then there exists a unique invariant measure in $L^1_{\lambda}$. Moreover, the solution is exponentially mixing with respect to the Wasserstein metric $W_1$.
  \item [(ii)] Suppose that (H2) and (H4) hold. If $\alpha > \frac{L_{\sigma}^4}{8}$, and for some $\lambda>0$ satisfying $\frac{\lambda^2}{8}<\alpha - \frac{L_{\sigma}^4}{8}$ and $\frac{\lambda^2}{4}<\alpha$, there exists $f\in L^2_{\lambda}$ such that
\begin{align}\label{260209.1121-2}
  \sup_{t\geq 0}\mathbb{E} [\Vert u(t)\Vert^2_{L^2_{\lambda}}] <\infty ,
\end{align}
then there exists a unique invariant measure in $L^2_{\lambda}$. Moreover, the solution is exponentially mixing with respect to the Wasserstein metric  $W_2$.
\end{itemize}
\end{theorem}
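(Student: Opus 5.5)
The plan is to combine the exponential contractivity of Theorem \ref{260206.2029} with a Krylov--Bogoliubov argument in Wasserstein space, in the spirit of a Banach fixed point for the dual semigroup $T_t^*$. We work in case (ii); case (i) is identical with $L^1_\lambda$ and $W_1$ in place of $L^2_\lambda$ and $W_2$. By Theorem \ref{260206.2029}(ii), the conditions $\frac{\lambda^2}{8}<\alpha-\frac{L_\sigma^4}{8}$ and $\frac{\lambda^2}{4}<\alpha$ (both strict) let us pick some $\kappa>0$ satisfying (\ref{260401.1108}).

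\textbf{Contraction of the semigroup.} First extend the contraction to random initial data. Take $\xi_1,\xi_2$ to be $\mathcal F_0$-measurable and independent of $W$, with $(\xi_1,\xi_2)$ an optimal coupling of two laws $\nu_1,\nu_2$ on $L^2_\lambda$. Theorem \ref{260206.2029} is stated with expectations of the initial data, so it applies directly; alternatively, condition on $\mathcal F_0$. It gives
\begin{align*}
W_2(\nu_1T_t,\nu_2T_t)\le 2C e^{-\kappa t}W_2(\nu_1,\nu_2).
\end{align*}
Here we need the finiteness assumption $\mathbb E\|u^2(t)-u^1(t)\|^2_{L^2_\lambda}<\infty$. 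I would justify it from $\nu_i$ having finite second moment together with a standard a priori moment bound, or simply treat it as part of the assumed well-posedness.

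\textbf{Existence of an invariant measure.} Let $\mu_t$ be the law of $u^f(t)$ for the given $f$ satisfying (\ref{260209.1121-2}). By the Markov property and the contraction,
\begin{align*}
W_2(\mu_{t+s},\mu_t)=W_2(\mu_sT_t,\delta_fT_t)\le 2Ce^{-\kappa t}W_2(\mu_s,\delta_f)\le 2Ce^{-\kappa t}\Big(\sup_{r\ge0}\big(\mathbb E\|u(r)\|^2_{L^2_\lambda}\big)^{1/2}+\|f\|_{L^2_\lambda}\Big).
\end{align*}
So $(\mu_t)$ is Cauchy in the complete metric space $(\mathcal P_2(L^2_\lambda),W_2)$; completeness holds because $L^2_\lambda$ is Polish. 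Let $\mu$ be the limit. To see that $\mu T_s=\mu$, use the Feller continuity of $T_s$ in $W_2$, which is again the contraction at time $s$:
\begin{align*}
W_2(\mu T_s,\mu)\le W_2(\mu T_s,\mu_tT_s)+W_2(\mu_{t+s},\mu_t)+W_2(\mu_t,\mu)\to0 \quad (t\to\infty).
\end{align*}
This yields an invariant measure with finite second moment.

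\textbf{Uniqueness and mixing.} For any initial law $\nu$ with finite second moment,
\begin{align*}
W_2(\nu T_t,\mu)=W_2(\nu T_t,\mu T_t)\le 2Ce^{-\kappa t}W_2(\nu,\mu),
\end{align*}
which is exponential mixing. If $\tilde\mu$ is another invariant measure in $\mathcal P_2$, then $W_2(\tilde\mu,\mu)\le 2Ce^{-\kappa t}W_2(\tilde\mu,\mu)\to0$, so $\tilde\mu=\mu$.

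\textbf{Main obstacle.} The delicate point is uniqueness among invariant measures supported in $L^2_\lambda$ without a priori finite second moment. I would handle it in one of two ways. The first is to use the pathwise $L^2(\Omega)$ estimate with truncated initial data: couple $\tilde\mu$ and $\mu$ through the same noise, take $\xi$ distributed as $\tilde\mu$, and note that the $W_2$ bound applied to $\xi\mathbf 1_{\{\|\xi\|\le R\}}$ gives convergence in probability of $\|u^{\xi}(t)-u^{\eta}(t)\|$, hence convergence of laws in the bounded-Lipschitz metric. The second is to interpret "unique" in the paper's sense within $\mathcal P_2$ (respectively $\mathcal P_1$). A secondary technicality is verifying the finite-moment hypothesis of Theorem \ref{260206.2029}(ii) along the iteration, which follows from (\ref{260209.1121-2}) and the contraction itself.
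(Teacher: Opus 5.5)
Your argument is correct at the same level of rigor as the paper, but it is organized differently.

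The paper uses a pullback (coupling-from-the-past) construction. It extends the Brownian sheet to negative times and solves from time $-\gamma$ with the fixed datum $f$. It then applies Theorem~\ref{260206.2029} on $[-\gamma,\infty)$ to the pair $u_{-\delta}(-\gamma)$, $f$, so that $u_{-\gamma}(0)$ is Cauchy in $L^2(\Omega;L^2_\lambda)$. The measure $\mu$ is the law of the limit $\xi$, and $W_2(\mu_s,\mu)\le 2C_{\alpha,\lambda,\kappa}e^{-\kappa s}(\Vert f\Vert_{L^2_\lambda}+C)$ follows by letting $\delta\to\infty$.

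You work forward in time in $(\mathcal P_2(L^2_\lambda),W_2)$. The contraction, lifted to laws by optimal couplings, together with $\mu_{t+s}=\mu_sT_t$, makes $(\mu_t)$ Cauchy, and completeness of the Wasserstein space gives the limit. Both routes rest on the same inputs: Theorem~\ref{260206.2029} with random initial data, and the bound (\ref{260209.1121-2}).

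Each route buys something different.
\begin{itemize}
\item The pullback yields an actual stationary random variable with strong $L^2(\Omega)$ convergence. Since every solution it compares starts from the deterministic $f$ at some past time, the moment hypothesis of Theorem~\ref{260206.2029}(ii), as stated, is supplied directly by (\ref{260209.1121-2}).
\item Your route needs no two-sided noise. It actually proves invariance; the paper only asserts it from $\mu_t\to\mu$, which tacitly requires continuity of $T_s$.
\item Your route also gives $W_2(\nu T_t,\mu)\le 2C_{\alpha,\lambda,\kappa}e^{-\kappa t}W_2(\nu,\mu)$ for all $\nu\in\mathcal P_2$, not just $\nu=\delta_f$, with uniqueness in $\mathcal P_2$ as an immediate corollary.
\end{itemize}

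Two small remarks.

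First, in your invariance and mixing steps the solution starts from $\mu$ (or $\nu$). There, finiteness of $\mathbb{E}\Vert u(t)\Vert^2_{L^2_\lambda}$ is not given by (\ref{260209.1121-2}). Saying it ``follows from the contraction itself'' is circular, since Theorem~\ref{260206.2029}(ii) presupposes it. In your Cauchy step the hypothesis does follow from (\ref{260209.1121-2}), because the marginals are $\mu_{r+s}$ and $\mu_r$ at every time $r$. The gap is the same continuity-of-$T_s$ issue the paper leaves implicit, so it is shared rather than specific to your route.

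Second, you are right that the paper's ``uniqueness follows from Theorem~\ref{260206.2029}'' only covers invariant measures with finite second (resp.\ first) moment. Your truncation argument removes that restriction, subject to the same shared moment caveat for the truncated data. It uses that the solutions from $\xi$ and $\xi\mathbf 1_{\{\Vert\xi\Vert\le R\}}$ coincide on $\{\Vert\xi\Vert\le R\}$. This gives convergence in probability, hence in the bounded-Lipschitz metric.
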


\begin{proof} We prove only (ii), since the proof of (i) is similar. 
Let $W(t,x)$ be the Brownian sheet corresponding to the space-time white noise $W(dt,dx)$, and let $W_1(t,x)$ be another Brownian sheet on $[0,\infty)\times\mathbb{R}$ that is independent of $W(t,x)$. Set
\begin{align*}
  \overline{W}(t,x) :=
  \begin{cases}
    W(t,x), & \mbox{if} \quad  t\geq 0,\  x\in\mathbb{R}, \\
    W_1(-t,x), & \mbox{if} \quad t<0, \ x\in\mathbb{R}.
  \end{cases}
\end{align*}
Let $\overline{\mathcal{F}}_t$ be the filtration satisfying usual conditions generated by $\{\overline{W}(s,x): s\leq t, \ x\in\mathbb{R}\}$. For any $\delta \geq 0$, consider the following SPDE:
\begin{align}\label{260209.1109}
\begin{cases}
    \partial_t u(t,x) = \frac{1}{2}\partial_{xx} u(t,x) + b(u(t,x)) +  \sigma(u(t,x)) \overline{W}(dt,dx), \quad x\in\mathbb{R},\\
    u(-\delta,x) = f(x), \quad x\in\mathbb{R}.
\end{cases}
\end{align}
Then (\ref{260209.1109}) admits a unique solution, 
which we denote by $u_{-\delta}(t,x)$ for $t\geq -\delta$.
By condition (\ref{260209.1121-2}) and the fact that 
the law $\mathcal{L}(u_{-\delta}(t))$ of $u_{-\delta}(t)$ equals the law of $u(t+\delta)$ on $L^2_{\lambda}$, 
%
there exists a constant $C>0$ independent of $t$ and $\delta$ such that
\begin{align}\label{260209.1547}
  \mathbb{E} [ \Vert u_{-\delta}(t)\Vert_{L^2_{\lambda}}^2 ] \leq C, \quad \forall\  \delta\geq 0 , \  t\geq -\delta .
\end{align}
For $\delta>\gamma$, consider the solutions $u_{-\delta}$ and $u_{-\gamma}$ on the interval $[-\gamma,\infty)$, Arguing as in the proof of Theorem \ref{260206.2029}, we obtain
\begin{equation}\label{260209.1618}
\begin{aligned}
  \left(\mathbb{E} [\Vert u_{-\delta}(t) - u_{-\gamma}(t)\Vert_{L^2_{\lambda}}^2 ]\right)^{\frac{1}{2}} \leq & 2 C_{\alpha,\lambda,\kappa} e^{-\kappa(t+\gamma)} \left(\mathbb{E}[\Vert  u_{-\delta}(-\gamma) - f\Vert_{L^2_{\lambda}}^2] \right)^{\frac{1}{2}} \\
  \leq &  2 C_{\alpha,\lambda,\kappa} e^{-\kappa(t+\gamma)} \left(\Vert f\Vert_{L^2_{\lambda}} + C \right), \quad t\geq -\gamma, 
\end{aligned}
\end{equation}
where we have used (\ref{260209.1547}) in the last line. This implies that for any $t\in\mathbb{R}$, the sequence of random variables $\{u_{-\gamma}(t)\}_{\gamma\geq 0}$ is a Cauchy sequence in $L^2(\Omega, L^2_{\lambda})$ as $\gamma\rightarrow\infty$. Let
\begin{align*}
  \xi := \lim_{\gamma\rightarrow\infty} u_{-\gamma}(0), \quad \mu := \mathcal{L}(\xi) \text{ on }  L^2_{\lambda}.
\end{align*}
Then $\mu$ is the unique invariant measure and independent of the initial value $f$. In fact, the law
\begin{align*}
 \mu_t := \mathcal{L} (u_0(t)) =  \mathcal{L} (u_{-t}(0))  \rightarrow \mu, \quad \text{ as } t\rightarrow\infty. 
\end{align*}
Hence $\mu$ is an invariant measure. The uniqueness of invariant measures follows from Theorem \ref{260206.2029}. Setting $t=0$ and $\gamma=s$ in (\ref{260209.1618}) and then letting $\delta\rightarrow +\infty$, we obtain
\begin{equation*}
\begin{aligned}
  W_2(\mu_s,\mu) \leq \left(\mathbb{E} [\Vert u_{-s}(0) - \xi \Vert_{L^2_{\lambda}}^2 ]\right)^{\frac{1}{2}}
  \leq  2 C_{\alpha,\lambda,\kappa} e^{-\kappa s} \left(\Vert f\Vert_{L^2_{\lambda}} + C \right) .
\end{aligned}
\end{equation*}
Therefore, the solution in $L^2_{\lambda}$ is exponentially mixing with respect to the Wasserstein metric $W_2$.
\end{proof}

\begin{proposition}
Assume that there exists a constant $\beta\in\mathbb{R}$ such that $\sigma(\beta) = 0$. 
\begin{itemize}
  \item [(i)] Suppose that (H4) holds. If $\alpha > 0$, then for any $\lambda>0$ satisfying $\frac{\lambda^2}{2}<\alpha$, there exists a unique invariant measure in $L^1_{\lambda}$. Moreover, the solution is exponentially mixing with respect to the Wasserstein metric $W_1$.
  \item [(ii)] Suppose that (H2) and (H4) hold. If $\alpha > \frac{L_{\sigma}^4}{8}$, then for any $\lambda>0$ satisfying $\frac{\lambda^2}{8}<\alpha - \frac{L_{\sigma}^4}{8}$ and $\frac{\lambda^2}{4}<\alpha$, there exists a unique invariant measure in $L^2_{\lambda}$. Moreover, the solution is exponentially mixing with respect to the Wasserstein metric  $W_2$.
\end{itemize}

\end{proposition}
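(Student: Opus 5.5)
The plan is to verify the hypotheses (\ref{260209.1121-1}) and (\ref{260209.1121-2}) of Theorem \ref{260209.1719} for one well-chosen initial value. The natural choice is the constant function $f\equiv\beta$. This function lies in $L^p_{\lambda}$ for every $p\ge1$ and $\lambda>0$, since $\int e^{-\lambda|x|}dx=2/\lambda<\infty$.

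Let $v$ be the solution started from $\beta$, and compare it with the constant $\beta$ itself. I would split into two cases according to the sign of $b(\beta)$. If $b(\beta)=0$, then $v\equiv\beta$ solves (\ref{3.1}), because $\sigma(\beta)=0$ and $P_t\beta=\beta$; by uniqueness $v(t)=\beta$ for all $t$, and both suprema are finite trivially. In general, however, $b(\beta)\neq 0$, so I would construct deterministic barriers instead.

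\textbf{Barriers.} Write $b_\alpha(u)=b(u)+\alpha u$, which is decreasing by (H4). The ODE $\dot y=b(y)$, $y(0)=\beta$, has a solution $y(t)$, spatially constant, that converges monotonically to the unique zero $z^*$ of $b$. The zero exists because $(b(x)-b(\beta))(x-\beta)\le-\alpha(x-\beta)^2$ forces $b$ to change sign. The constant $z^*$ solves the PDE with noise coefficient $\sigma(z^*)$, which is not zero in general, so it is not a stationary solution of the SPDE. I would therefore not aim at a closed-form comparison and instead bound moments directly, in the spirit of the proof of Theorem \ref{260206.2029}.

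\textbf{Moment bound.} Set $w=v-\beta$. Using the $e^{-\alpha t}P_t$ mild form, I get
\begin{align*}
w(t,x)=\int_0^t\!\!\int e^{-\alpha(t-s)}p_{t-s}(x,y)\big[b_\alpha(v)-b_\alpha(\beta)+b(\beta)\big]dyds+\int_0^t\!\!\int e^{-\alpha(t-s)}p_{t-s}(x,y)\big[\sigma(v)-\sigma(\beta)\big]W(ds,dy),
\end{align*}
where I used $\sigma(\beta)=0$ and $b_\alpha(\beta)=b(\beta)+\alpha\beta$. The monotonicity of $b_\alpha$ has to be exploited sign-wise, so I would split $w=w^+-w^-$ via comparison. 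Let $\bar v$ be the solution started from $\beta\vee$ (a large constant), or simply compare $v$ with the solutions of the same SPDE in which the drift is modified. It is cleaner to argue as follows. On $\{v\ge\beta\}$ the drift difference $b_\alpha(v)-b_\alpha(\beta)$ is $\le0$, and on $\{v\le\beta\}$ it is $\ge0$. So I would run Step 1 of the proof of Theorem \ref{260206.2029} with the second "solution" replaced by the deterministic supersolution $\beta+|b(\beta)|/\alpha$ and the subsolution $\beta-|b(\beta)|/\alpha$. Indeed $\bar\beta:=\beta+|b(\beta)|/\alpha$ satisfies $b(\bar\beta)\le b(\beta)-\alpha(\bar\beta-\beta)\le0$.

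The comparison of $v$ with the solution $\bar v$ started at $\bar\beta$ reduces the task to bounding $\bar v-\bar\beta$ from above. For this, the computation in Step 1 goes through verbatim: the drift term is $\le0$ whenever $\bar v\ge\bar\beta$. The negative part is handled symmetrically, and the leftover constant $b(\bar\beta)$ only helps. This yields
$$\sup_t e^{2\kappa t}\mathbb{E}\|(v(t)-\bar\beta)^+\|^2_{L^2_\lambda}\lesssim\|\beta-\bar\beta\|^2_{L^2_\lambda}+\text{const},$$
with $\sigma(v)-\sigma(\beta)$ controlled by $L_\sigma(|v-\bar\beta|+|\bar\beta-\beta|)$. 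The additional constant term $L_\sigma|b(\beta)|/\alpha$ gives a finite forcing contribution
$$\int_0^\infty e^{-2(\alpha-\lambda^2/8)s}s^{-1/2}\,ds<\infty.$$
The same fixed-point estimate on $\mathcal N_T^0$ then gives $\sup_t\mathbb{E}\|v(t)\|^2_{L^2_\lambda}<\infty$. Finiteness of $\mathcal N_T^0$ for each $T$, needed to absorb the term, follows from standard a priori moment bounds and Gronwall, as in the proof of Theorem \ref{260206.2029}. For (i) the same scheme, with expectations in place of $L^2(\Omega)$ norms, gives $\sup_t\mathbb{E}\|v(t)\|_{L^1_\lambda}<\infty$, since the stochastic integral has mean zero.

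The main obstacle is making this barrier comparison rigorous without Itô's formula. The first thing I would try is to apply the comparison principle to $v$ and the constant supersolution $\bar\beta$, treating it as a solution of a modified SPDE with the extra nonnegative drift $-b(\bar\beta)$. Once (\ref{260209.1121-1}) and (\ref{260209.1121-2}) hold, Theorem \ref{260209.1719} (i) and (ii) yield the proposition.
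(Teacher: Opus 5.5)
Your reduction to Theorem \ref{260209.1719} and your choice $f\equiv\beta$ are fine, but the central step does not work. The constant $\bar\beta=\beta+|b(\beta)|/\alpha$ cannot serve as a barrier. The comparison principle for (\ref{3.1}) compares two solutions driven by the \emph{same} diffusion coefficient $\sigma(\cdot)$ with ordered drifts. The constant $\bar\beta$ solves the SPDE with drift $b-b(\bar\beta)$ only if $\sigma(\bar\beta)=0$, which is not assumed. So your proposed fix of ``treating $\bar\beta$ as a solution of a modified SPDE with extra drift $-b(\bar\beta)$'' is invalid. The conclusion it is meant to give is also false in general: since $\sigma(\bar\beta)\neq 0$, the solution $\bar v$ started at $\bar\beta$ moves below $\bar\beta$ with positive probability immediately. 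Passing from $v$ to $\bar v$ via $v\le\bar v$ is correct, but it only moves the problem to a new initial datum. The mild formula integrates $b_\alpha(\bar v)-b_\alpha(\bar\beta)$ against the heat kernel over all $(s,y)$, so knowing its sign only on $\{\bar v\ge\bar\beta\}$ gives no upper bound on $\bar v-\bar\beta$. Treating positive and negative parts separately would need an It\^o/Kato-type inequality, which is exactly what is unavailable. Hence nothing in your argument supplies the global sign information needed to discard the drift, and the uniform moment bound is not established.

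The missing idea is that $\beta$ is the only level at which a constant exactly solves the noisy equation, because $\sigma(\beta)=0$: the constant $\beta$ solves (\ref{3.1}) with drift $b-b(\beta)$. This is where the sign split you started (and then abandoned) is needed. If $b(\beta)\ge 0$, then $b-b(\beta)\le b$, so comparison gives $v\ge\beta$ for every initial datum $f\ge\beta$. Then $b_\alpha(v)-b_\alpha(\beta)\le 0$ everywhere, and the $e^{-\alpha t}P_t$ mild form yields
\[
0\le v(t,x)-\beta\le e^{-\alpha t}P_t(f-\beta)(x)+\frac{b(\beta)}{\alpha}+\int_0^t\int_{\mathbb{R}}e^{-\alpha(t-s)}p_{t-s}(x,y)\,\sigma(v(s,y))\,W(ds,dy).
\]
Taking expectations gives (i). For (ii), take $L^2(\Omega)$-norms and run the fixed-point argument of Theorem \ref{260206.2029}, using $|\sigma(v)|=|\sigma(v)-\sigma(\beta)|\le L_\sigma|v-\beta|$; this closes under the stated conditions on $\lambda$. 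If $b(\beta)\le 0$, the argument is symmetric with $f\le\beta$ and $v\le\beta$. This is the paper's proof. Your $f\equiv\beta$ lies on the correct side in both cases, so it suffices to drop $\bar\beta$ and use $\beta$ itself as the barrier.
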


\begin{proof} We only prove (i); the proof of (ii) is similar. By Theorem \ref{260209.1719}, it is enough to prove that for any $\lambda>0$ satisfying $\frac{\lambda^2}{2}<\alpha$, there exists $f\in L^1_{\lambda}$ such that the solution $u$ to (\ref{3.1}) with initial value $f$ satisfies
\begin{align}\label{260605.1713}
  \sup_{t\geq 0}\mathbb{E} \Vert u(t)\Vert_{L^1_{\lambda}} <\infty .
\end{align}

Taking $y=\beta$ in (H4) gives
\begin{align*}
  [b(x) - b(\beta) + \alpha (x-\beta)] (x-\beta)\leq 0 .
\end{align*}
Set
\begin{align*}
  \bar{b}(u) := b(u) - b(\beta), \quad \bar{b}_{\alpha}(u) := b(u) - b(\beta) +\alpha (u-\beta).
\end{align*}
Then
\begin{align}\label{260209.2104}
  \begin{cases}
    \bar{b}_{\alpha}(u)\leq 0, & \mbox{if } u\geq \beta, \\
    \bar{b}_{\alpha}(u)\geq 0, & \mbox{if } u\leq \beta.
  \end{cases}
\end{align}
Consider the following stochastic reaction-diffusion equation:
\begin{align*}
\begin{cases}
    \partial_t v(t,x) = \frac{1}{2}\partial_{xx} v(t,x) + \bar{b}(v(t,x)) + \sigma(v(t,x))W(dt,dx),\\
    v(0,x) \equiv \beta.
\end{cases}
\end{align*}
Since $\sigma(\beta) = 0$, the constant $\beta$ is the unique solution to this equation. We divide the remaining proof into two cases according to whether $b(\beta)\geq 0$ or $b(\beta)\leq 0$.

\vskip 0.6cm

\textbf{Case 1.} $b(\beta)\geq 0$. In this case, take any $\lambda>0$ satisfying $\frac{\lambda^2}{2}<\alpha$, and take any initial value 
\begin{align}\label{260605.1736}
  f\in \{h\in L^1_{\lambda}: h(x)\geq \beta \text{ for all } x\in\mathbb{R}\}.
\end{align}
%

We rewrite (\ref{3.1}) in the following form:
\begin{align*}
\begin{cases}
      \partial_t u(t,x) = \frac{1}{2}\partial_{xx} u(t,x) + \bar{b}(u(t,x)) + b(\beta) + \sigma(u(t,x))W(dt,dx), \\
    u(0,x) = f(x).
\end{cases}
\end{align*}
By the comparison principle, $u(t,x)\geq \beta$ for any $(t,x)\in\mathbb{R}_+\times\mathbb{R}$. As in (\ref{260207.2002}), using (\ref{260209.2104}), we obtain

\begin{equation*}
\begin{aligned}
  \beta\leq u(t,x) = & e^{-\alpha t} P_t f(x) + \int_{0}^{t} \int_{\mathbb{R}} e^{-\alpha (t-s)} p_{t-s}(x,y)[\bar{b}_{\alpha}(u(s,y))+b(\beta) + \alpha\beta] dsdy \nonumber\\
  & + \int_{0}^{t} \int_{\mathbb{R}} e^{-\alpha (t-s)} p_{t-s}(x,y)\sigma (u(s,y)) W(ds,dy) \\
  \leq & e^{-\alpha t} P_t f(x) + [ b(\beta) + \alpha\beta] \int_{0}^{t} e^{-\alpha (t-s)}  ds \nonumber\\
  & + \int_{0}^{t} \int_{\mathbb{R}} e^{-\alpha (t-s)} p_{t-s}(x,y)\sigma (u(s,y)) W(ds,dy) \\
    \leq & e^{-\alpha t} P_t f(x) + \frac{|b(\beta) +\alpha\beta|}{\alpha}  + \int_{0}^{t} \int_{\mathbb{R}} e^{-\alpha (t-s)} p_{t-s}(x,y)\sigma (u(s,y)) W(ds,dy).
\end{aligned}
\end{equation*}
Hence
\begin{equation*}
  \begin{aligned}
\mathbb{E}|u(t,x)| \leq & \mathbb{E}|u(t,x)-\beta| + |\beta| = \mathbb{E}[u(t,x)] -\beta + |\beta|\\
 \leq & e^{-\alpha t} P_t f(x) + \frac{|b(\beta)+\alpha\beta|}{\alpha} - \beta + |\beta|.
\end{aligned}
\end{equation*}
Applying (\ref{260207.1918}) with $p=1$ and Fubini's theorem yields
\begin{align*}
  \mathbb{E}\Vert u(t)\Vert_{L^1_{\lambda}} \leq e^{-(\alpha - \frac{\lambda^2}{2}) t} \Vert f\Vert_{L^1_{\lambda}} + \Big(\frac{|b(\beta)+\alpha\beta|}{\alpha} -\beta + |\beta|\Big)\frac{2}{\lambda}.
\end{align*}
This proves (\ref{260605.1713}) for all $f$ in (\ref{260605.1736}). 
%

\vskip 0.6cm

\textbf{Case 2.} $b(\beta)\leq 0$. In this case, take any $\lambda>0$ satisfying $\frac{\lambda^2}{2}<\alpha$, and take any initial value 
\begin{align}\label{260605.1726}
  f\in \{h\in L^1_{\lambda}: h(x)\leq \beta \text{ for all } x\in\mathbb{R}\}.
\end{align}
%
%
%
%
As in Case 1, we have
\begin{align*}
  \beta\geq u(t,x) \geq e^{-\alpha t} P_t f(x) - \frac{|b(\beta)+\alpha\beta|}{\alpha} + \int_{0}^{t} \int_{\mathbb{R}} e^{-\alpha (t-s)} p_{t-s}(x,y)\sigma (u(s,y)) W(ds,dy).
\end{align*}
Hence
\begin{equation*}
  \begin{aligned}
\mathbb{E}|u(t,x)| \leq & \mathbb{E}|\beta - u(t,x) | + |\beta| = \beta - \mathbb{E}[u(t,x)] + |\beta|\\
 \leq & e^{-\alpha t} |P_t f(x)| + \frac{|b(\beta)+\alpha\beta|}{\alpha} + \beta + |\beta|.
\end{aligned}
\end{equation*}
Therefore, (\ref{260605.1713}) also holds for all $f$ in (\ref{260605.1726}).
%
\end{proof}

\vskip 0.6cm

We next provide more concrete conditions on the coefficients $b$ and $\sigma$ which guarantee the existence and uniqueness of invariant measures and exponential mixing. 
Introduce

\vskip 0.4cm

\noindent {\bf (H5)} $b$ is locally Lipschitz and satisfies
  \begin{align}\label{260607.1313}
    |b(x) - b(y)| \leq L_b |x-y|(1+|x|^{\nu-1}+|y|^{\nu-1}),\quad x,y\in\mathbb{R},
  \end{align}
for some constant $L_b\geq 0$, where $\nu\geq 1$.

Under conditions (H2), (H4) and (H5), we shall prove the existence of a unique invariant measure in $L^2_{\lambda}$ and exponential mixing with respect to the Wasserstein metric $W_2$. As a preparation, we will give a  uniform-in-time moment estimate for the solution to (\ref{3.1}), which is also of independent interest. For this estimate, we impose the following conditions.
\begin{itemize}
  \item [(S1)] The initial value $u_0: \mathbb{R}\rightarrow \mathbb{R}$ is bounded and continuous.
  \item [(S2)] The coefficient $\sigma$ is Lipschitz and satisfies 
\begin{align}\label{260616.1301}
  |\sigma(u)|^2 \leq C_{\sigma} + G_{\sigma}|u|^2, \quad u\in\mathbb{R}.
\end{align}
  \item [(S3)] There exist constants $C_b\geq 0$ and $\theta\in\mathbb{R}$ such that
\begin{align}\label{260603.2134}
  ub(u)\leq C_b - \theta|u|^2, \quad \forall\  u\in\mathbb{R}.
\end{align}
\end{itemize}

\begin{proposition}\label{260604.1227}
Assume that (S1)--(S3) and (H5) hold, and that
\[
\theta> \frac{G_{\sigma}^2}{8}.
\]
Then the unique solution $u$ to equation (\ref{3.1}) satisfies
\begin{align}
  \sup_{t\geq 0}\sup_{x\in\mathbb{R}} \mathbb{E}|u(t,x)|^2 <\infty.
\end{align}
In particular, for any $\lambda>0$,
\begin{align*}
  \sup_{t\geq 0} \mathbb{E}\int_{\mathbb{R}} |u(t,x)|^2 e^{-\lambda |x|} dx \leq  \frac{2}{\lambda}\sup_{t\geq 0}\sup_{x\in\mathbb{R}}  \mathbb{E}|u(t,x)|^2 <\infty .
\end{align*}
\end{proposition}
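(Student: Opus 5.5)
The plan is to reproduce, at the level of $\sup_x \mathbb{E}|\cdot|^2$, the mechanism behind Theorem \ref{260206.2029}(ii). We absorb the linear part $-\theta' u$ of the drift into the semigroup, writing $e^{-\theta' t}P_t$ with $\theta'<\theta$ close to $\theta$. We then control the nonlinear remainder by a one-sided bound combined with comparison, and close with a fixed-point inequality in the norm
\[
\mathcal{M}_T(u):=\sup_{0\le t\le T}\sup_{x\in\mathbb{R}}\big(\mathbb{E}|u(t,x)|^2\big)^{1/2}.
\]
The numerology already matches the hypothesis. By (\ref{260603.2012}),
\[
\int_0^t\frac{e^{-2\theta'(t-s)}}{2\sqrt{\pi(t-s)}}\,ds\le \frac{1}{2\sqrt{2\theta'}},
\]
so the stochastic convolution contributes at most $\big(C_\sigma/(2\sqrt{2\theta'})+\frac{G_\sigma}{2\sqrt{2\theta'}}\mathcal{M}_T(u)^2\big)^{1/2}$. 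The coefficient in front of $\mathcal{M}_T(u)$ is then $(G_\sigma/\sqrt{8\theta'})^{1/2}<1$ exactly when $\theta'>G_\sigma^2/8$. This is why $\theta>G_\sigma^2/8$ is assumed, and it leaves room to choose $\theta'\in(G_\sigma^2/8,\theta)$.

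\textbf{Step 1: one-sided bound on the drift.} Set $b_{\theta'}(u):=b(u)+\theta' u$. By (S3), for $u>0$ we have $b_{\theta'}(u)\le C_b/u-(\theta-\theta')u$, which is $\le 0$ once $u\ge R:=\sqrt{C_b/(\theta-\theta')}$. On $[0,R]$, $b$ is bounded by (H5). Hence $b_{\theta'}(u)\le K$ for all $u\ge 0$, and symmetrically $b_{\theta'}(u)\ge -K$ for all $u\le 0$, for a constant $K$.

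\textbf{Step 2: upper and lower comparison processes.} Let $U$ solve (\ref{3.1}) with $b$ replaced by a modified drift $\hat b\ge b$ that coincides with $b$ on $[0,\infty)$ and satisfies $\hat b(u)+\theta'u\le K$ for all $u$. Let $L$ solve the analogous equation with a drift $\check b\le b$ satisfying $\check b(u)+\theta' u\ge -K$. Both use the same noise and the same initial value $u_0$. The comparison principle should give $L\le u\le U$, so that $|u|\le |U|+|L|$. In the mild form of $U$ with respect to $e^{-\theta' t}P_t$, the drift term is bounded above by $K/\theta'$. This yields the pointwise inequality
\[
U(t,x)\le e^{-\theta' t}P_tu_0(x)+\frac{K}{\theta'}+\int_0^t\!\!\int_{\mathbb{R}}e^{-\theta'(t-s)}p_{t-s}(x,y)\sigma(U(s,y))W(ds,dy),
\]
which is the analogue of (\ref{260207.2002}). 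A matching lower inequality holds for $L$, and by (S1) the term $P_tu_0$ is bounded by $\|u_0\|_\infty$.

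\textbf{Step 3: the main obstacle.} The inequality in Step 2 controls only $U^+$, whereas we need $\mathbb{E}|U|^2$. This is the step I expect to be hardest. One option is to build $\hat b$ so that $U$ stays bounded below by a constant, for instance $\hat b(u)>0$ large for $u<0$ so that $U\ge L'$ for some deterministic or controlled $L'$. Then $|U|\le U^+ + |L'|$. Taking $L^2(\Omega)$-norms and applying (\ref{260616.1301}) to $\sigma(U)$ would give
\[
\mathcal{M}_T(U)\le \|u_0\|_\infty+\frac{K}{\theta'}+C+\Big(\frac{C_\sigma}{2\sqrt{2\theta'}}+\frac{G_\sigma}{2\sqrt{2\theta'}}\,\mathcal{M}_T(U)^2\Big)^{1/2}.
\]
However, such a choice of $\hat b$ must still satisfy $\hat b(u)+\theta'u\le K$ on $u<0$, and that is exactly where the construction of $\hat b$ must be done carefully. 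If it fails, the fallback is to run the argument directly on $u$, splitting the drift term on $\{u\ge0\}$ and $\{u<0\}$ and exploiting the sign information from Step 1.

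\textbf{Step 4: closing the estimate.} Finiteness of $\mathcal{M}_T$ for each fixed $T$ follows from the standard finite-horizon moment bounds for bounded initial data under (H5) and (S2), via Gronwall as in the proof of Theorem \ref{260206.2029}. Absorbing the term $\mathcal{M}_T$ on the right-hand side then gives a bound uniform in $T$. Finally, the weighted statement follows by Fubini's theorem together with $\int_{\mathbb{R}}e^{-\lambda|x|}dx=2/\lambda$.
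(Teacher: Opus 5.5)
There is a genuine gap at Step 2, and Step 3 does not close it.

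\textbf{Why Step 2 fails.} The modified drift you ask for does not exist. Dividing (S3) by $u<0$ gives $b(u)\ge\theta|u|-C_b/|u|$. Hence any $\hat b\ge b$ satisfies $\hat b(u)+\theta'u\ge(\theta-\theta')|u|-C_b/|u|\to\infty$ as $u\to-\infty$. With your choice $\theta'<\theta$, the requirement $\hat b(u)+\theta'u\le K$ for all $u$ can never be met. Even with $\theta'=\theta$ it fails for superlinear restoring drifts such as $b(u)=u-u^3$.

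This is intrinsic, not a matter of careful construction. An upper comparison drift must push upward at least as hard as $b$ when the process is very negative. So its mild drift term cannot be bounded above by $K/\theta'$, and the pointwise upper inequality for $U$ is unavailable.

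The proposed repairs in Step 3 do not work either. A finite drift cannot keep $U$ above a deterministic level in general; take $\sigma\equiv1$. Running the argument directly on $u$ leaves the term $\big(b_{\theta'}(u)\big)^+\mathbf{1}_{\{u<0\}}$ uncontrolled. That term is governed by the negative excursions of $u$, which is exactly what you are trying to bound.

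\textbf{The missing idea.} You need to control the drift of the comparison process in $L^2(\Omega)$ rather than pointwise. The paper does this with monotonicity plus a second comparison.

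With $m=\theta$ and $b_\theta=b+\theta u$, it defines a nonincreasing function $h_+$. It equals $C_{b,\theta}$ on $[0,\infty)$ and $-\gamma u+\sup_{u\le v\le0}(b(v)-b(0)+C_{b,\theta})$ on $(-\infty,0]$. This gives $b_\theta\le h_+$, $h_+\ge C_{b,\theta}$, and $h_+(u)\ge-\gamma u$.

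Let $\tilde v$ solve the linear equation with drift $-(\theta+\gamma)v$. Comparison gives $v_+\ge\tilde v$, and monotonicity then gives $C_{b,\theta}\le h_+(v_+)\le h_+(\tilde v)$. By (H5) and Lemma \ref{260603.2255} with $p=2\nu$ (which needs $\gamma>2(2\nu)^2G_\sigma^2$), $h_+(\tilde v)$ has uniformly bounded second moment.

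The paper then freezes $h_+(v_+)$ as a forcing term with uniformly bounded $L^2(\Omega)$-norm. The stochastic convolution closes with exactly your contraction factor $G_\sigma/(2\sqrt{2\theta})<1$. It runs this through a Picard iteration, which also avoids needing a priori finiteness. The mirror construction $h_-$ gives $v_-\le u\le v_+$, which finishes the proof.

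Your numerology, Step 1, and the final Fubini step are correct. Without this monotone-drift device, however, the estimate does not close.
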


\begin{remark}
Theorem 3 of \cite{AM03} also establishes a related uniform-in-time estimate in a weighted $L^p$ space. However, the uniform-in-time estimate is obtained in a different way here. As a result, our condition on $\theta$ is expressed in terms of the linear-growth constant $G_{\sigma}$ in (S2), rather than the Lipschitz constant of $\sigma$, as in \cite{AM03}. In the present setting, the Lipschitz continuity of $\sigma$ is used mainly to ensure well-posedness of the equation, whereas the a priori estimate itself only uses the growth bound (\ref{260616.1301}). In this sense, the condition imposed on $\theta$ here is less restrictive than that in \cite{AM03}.

\end{remark}

The proof of Proposition \ref{260604.1227} is inspired by that of Theorem 3 in \cite{AM03} and will be given at the end of this section. 

\vskip 0.6cm

\begin{theorem}\label{260604.1328}

Suppose that (H2), (H4) and (H5) hold. 
If $\alpha > \frac{L_{\sigma}^4}{8}$, 
then for any $\lambda>0$ satisfying $\frac{\lambda^2}{8}<\alpha - \frac{L_{\sigma}^4}{8}$ and $\frac{\lambda^2}{4}<\alpha$, we have
\begin{align}\label{5.2-0}
  \sup_{t\geq 0}\mathbb{E} [\Vert u(t)\Vert^2_{L^2_{\lambda}}] <\infty ,
\end{align}
for every bounded continuous initial value $f$ on $\mathbb{R}$. 
In particular, there exists a unique invariant measure in $L^2_{\lambda}$. Moreover, the solution is exponentially mixing with respect to the Wasserstein metric  $W_2$.

\end{theorem}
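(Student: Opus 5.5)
The plan is to reduce Theorem \ref{260604.1328} to Proposition \ref{260604.1227} combined with Theorem \ref{260209.1719}(ii). Once the uniform bound (\ref{5.2-0}) holds for one bounded continuous initial value $f$, Theorem \ref{260209.1719}(ii) gives existence and uniqueness of the invariant measure in $L^2_\lambda$ and exponential mixing in $W_2$. This is because the conditions $\alpha>\frac{L_\sigma^4}{8}$, $\frac{\lambda^2}{8}<\alpha-\frac{L_\sigma^4}{8}$ and $\frac{\lambda^2}{4}<\alpha$ are exactly the hypotheses there. Note also that $f$ bounded implies $f\in L^2_\lambda$. The remaining work is to check (S1)--(S3) from (H2), (H4), (H5), with constants satisfying $\theta>G_\sigma^2/8$. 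Well-posedness under (H2), (H5) and the dissipativity (H4) is taken for granted, as in the standing assumption of this section.

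(S1) holds by assumption on $f$. For (S2), Lipschitz continuity of $\sigma$ gives $|\sigma(u)|\le|\sigma(0)|+L_\sigma|u|$. Hence for every $\varepsilon>0$,
\begin{align*}
|\sigma(u)|^2\le (1+\varepsilon^{-1})|\sigma(0)|^2+(1+\varepsilon)L_\sigma^2|u|^2 ,
\end{align*}
so (S2) holds with $G_\sigma=(1+\varepsilon)L_\sigma^2$ and $C_\sigma=(1+\varepsilon^{-1})\sigma(0)^2$. For (S3), take $y=0$ in (H4): $(b(u)-b(0))u\le-\alpha u^2$. Young's inequality then gives
\begin{align*}
ub(u)\le -\alpha u^2+|b(0)||u|\le \frac{|b(0)|^2}{2\delta}-(\alpha-\tfrac{\delta}{2})u^2 ,
\end{align*}
so $\theta=\alpha-\delta/2$ and $C_b=|b(0)|^2/(2\delta)$.

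The only delicate point is the strict inequality $\theta>G_\sigma^2/8$, which now reads
\begin{align*}
\alpha-\frac{\delta}{2}>\frac{(1+\varepsilon)^2L_\sigma^4}{8}.
\end{align*}
Since $\alpha>L_\sigma^4/8$ strictly, we can choose $\varepsilon,\delta>0$ small enough that this holds. If $\sigma(0)=0$ or $b(0)=0$, simply take $\varepsilon=0$ or $\delta=0$. Then (H5) holds by assumption, and Proposition \ref{260604.1227} yields $\sup_{t,x}\mathbb{E}|u(t,x)|^2<\infty$. Integrating against $e^{-\lambda|x|}$ gives (\ref{5.2-0}) for every $\lambda>0$, and in particular for those in the statement.

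Applying Theorem \ref{260209.1719}(ii) with this $f$ concludes the argument. The invariant measure it produces is independent of $f$, so the conclusion holds for every bounded continuous $f$. I expect no serious obstacle; the main care is in matching constants so that the strict inequality is preserved.
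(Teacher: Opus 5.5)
Your proposal is correct and follows essentially the same route as the paper: derive (S2) and (S3) from (H2) and from (H4) with $y=0$, keeping enough slack that $\theta>G_\sigma^2/8$ still holds strictly, then invoke Proposition \ref{260604.1227} for the uniform bound and apply Theorem \ref{260209.1719}(ii). The only differences are in how the slack is parametrized ($(1+\varepsilon)L_\sigma^2$ and $\delta/2$ versus the paper's $L_\sigma^2+\eta$ and $\varepsilon$), and your special cases $\varepsilon=0$ or $\delta=0$ are unnecessary.
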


\begin{remark}
If the initial value of (\ref{3.1}) is bounded and continuous function, then (H2), (H4) and (H5) imply, by Theorem 1 of \cite{AM03} (see also Theorem 3.4.1 of \cite{MZ99}), that (\ref{3.1}) admits a unique solution.
\end{remark}

\begin{proof}
By Theorem \ref{260209.1719}, 
we only need to prove (\ref{5.2-0}) for any bounded continuous initial value $f$ on $\mathbb{R}$. 

Since $\alpha> \frac{L_{\sigma}^4}{8}$, there exist $\varepsilon, \eta>0$ such that
\[
\alpha-\varepsilon> \frac{(L_{\sigma}^2+\eta)^2}{8}.
\]
Fix such $\varepsilon$ and $\eta$. Taking $y=0$ in (H4) gives
\begin{align*}
  (b(u) - b(0))u \leq -\alpha |u|^2, \quad\forall\  u\in\mathbb{R},
\end{align*}
which implies
\begin{align}\label{260605.2001}
  ub(u) \leq -(\alpha - \varepsilon) |u|^2 + \frac{|b(0)|^2}{4\varepsilon}, \quad\forall\  u\in\mathbb{R}.
\end{align}
By (H2), $|\sigma(u) | \leq |\sigma(0)| + L_{\sigma} |u|$. Hence
\begin{align}\label{260605.2002}
  |\sigma(u)|^2 \leq |\sigma(0)|^2 + L_{\sigma}^2 |u|^2 + 2 L_{\sigma} |\sigma(0)||u| \leq (L_{\sigma}^2 + \eta)|u|^2 + \left(1+ \frac{L_{\sigma}^2}{\eta}\right)|\sigma(0)|^2 .
\end{align}
Set $\theta:= \alpha -\varepsilon$ and $G_{\sigma}:= L_{\sigma}^2+\eta$. Then Proposition \ref{260604.1227} gives
\[
  \sup_{t\geq 0} \mathbb{E}\int_{\mathbb{R}} |u(t,x)|^2 e^{-\lambda |x|} dx<\infty,
\]
which proves (\ref{5.2-0}) for any $\lambda>0$. 

\end{proof}

Next, we turn to the proof of Proposition \ref{260604.1227}. 
We begin with three auxiliary lemmas. Consider the auxiliary equation
%
\begin{align}\label{260603.1828}
	\left\{
	\begin{aligned}
		d v(t,x) &=[\frac{1}{2}\partial_{xx} v(t,x) - mv(t,x)]dt + \tilde{b}(v(t,x))dt + \sigma(v(t,x)) W(dt,dx),\\
		v(0,x)&=u_0(x), \quad x\in \mathbb{R},
	\end{aligned}
	\right.
\end{align}
where $m>0$.

\begin{lemma}\label{260603.2255}
Assume that (S1) and (S2) hold. Let $\tilde{v}$ be the unique solution to (\ref{260603.1828}) with $\tilde{b}(v)  = - \gamma v$. 
Then, for any $m>0$, $p\geq 2$ and $\gamma > 2p^2 G_{\sigma}^2$, we have
\begin{align}\label{260603.1925}
  \sup_{t\geq 0}\sup_{x\in\mathbb{R}} \mathbb{E} [ |\tilde{v}(t,x)|^p] <\infty.
\end{align}
\end{lemma}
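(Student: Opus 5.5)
We work with the mild form of the linear-drift equation. With $\tilde b(v)=-\gamma v$, equation (\ref{260603.1828}) is driven by the operator $\frac12\partial_{xx}-(m+\gamma)$, whose semigroup is $e^{-(m+\gamma)t}P_t$. Hence
\begin{align*}
  \tilde v(t,x)=e^{-(m+\gamma)t}P_tu_0(x)+\int_0^t\int_{\mathbb{R}} e^{-(m+\gamma)(t-s)}p_{t-s}(x,y)\sigma(\tilde v(s,y))W(ds,dy).
\end{align*}
By (S1), the first term is bounded by $\|u_0\|_\infty$ uniformly in $(t,x)$. For the stochastic integral we use the Burkholder--Davis--Gundy inequality for Walsh integrals, with constant of order $(4p)^{p/2}$ or, more precisely, $c_p\le (2\sqrt{p})^{p}$. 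Combining this with Minkowski's inequality in $L^{p/2}(\Omega)$ gives
\begin{align*}
  \|\tilde v(t,x)\|_{L^p(\Omega)}^2\le 2\|u_0\|_\infty^2+ 8p\int_0^t\int_{\mathbb{R}} e^{-2(m+\gamma)(t-s)}p_{t-s}(x,y)^2\,\big\|\sigma(\tilde v(s,y))\big\|_{L^p(\Omega)}^2\,dy\,ds .
\end{align*}
By (S2) and Minkowski,
\begin{align*}
  \|\sigma(\tilde v)\|_{L^p(\Omega)}^2=\big\||\sigma(\tilde v)|^2\big\|_{L^{p/2}(\Omega)}\le C_\sigma+G_\sigma\|\tilde v\|_{L^p(\Omega)}^2 .
\end{align*}

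Set $M(T):=\sup_{0\le t\le T}\sup_x\|\tilde v(t,x)\|^2_{L^p(\Omega)}$. Using (\ref{260603.2012}), $\int_{\mathbb{R}}p_{t-s}(x,y)^2dy=(4\pi(t-s))^{-1/2}$. Together with
\begin{align*}
  \int_0^\infty \frac{e^{-2\gamma r}}{2\sqrt{\pi r}}\,dr=\frac{1}{2\sqrt{2\gamma}},
\end{align*}
this yields
\begin{align*}
  M(T)\le 2\|u_0\|_\infty^2+\frac{8p}{2\sqrt{2\gamma}}\big(C_\sigma+G_\sigma M(T)\big).
\end{align*}
The coefficient of $M(T)$ is $\frac{2\sqrt2\, pG_\sigma}{\sqrt{\gamma}}$, and it is strictly less than $1$ as soon as $\gamma>8p^2G_\sigma^2$. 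The hypothesis $\gamma>2p^2G_\sigma^2$ matches this once a sharper BDG constant is used, for instance $c_p^{2/p}\le 4p$ together with $\sqrt{\cdot}$ bookkeeping, or simply $2\|u_0\|^2$ replaced by a $(1+\epsilon)$-split. I will track the constant carefully and fit it to the stated threshold. Absorbing the $M(T)$ term then gives $M(T)\le C$ independently of $T$, which is (\ref{260603.1925}).

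The main obstacle is justifying the absorption, namely showing $M(T)<\infty$ a priori for each finite $T$. I would get this from the standard finite-horizon moment bound for Walsh solutions with Lipschitz coefficients and bounded initial data: a Picard iteration, or Gronwall on $\sup_x\mathbb{E}|\tilde v|^p$ with the weakly singular kernel $(t-s)^{-1/2}$. That bound holds since $\sigma$ is Lipschitz and the drift is linear. If needed, I would instead localize by a stopping argument, or work with the Picard iterates, each of which has finite uniform moments, and pass to the limit using Fatou. A secondary technical point is getting the BDG constant sharp enough to reach exactly the threshold $2p^2G_\sigma^2$. Since any $\gamma$ large enough gives the result, I would present the argument with explicit constants and check that they reach this threshold.
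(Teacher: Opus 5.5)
Your argument is the same as the paper's: the mild form with semigroup $e^{-(m+\gamma)t}P_t$, BDG with $c_p^{2/p}\le 4p$, Minkowski in $L^{p/2}(\Omega)$ together with (S2), the identity $\int_{\mathbb{R}}p_{t-s}(x,y)^2\,dy=(4\pi(t-s))^{-1/2}$, and absorption justified by the standard finite-horizon bound.

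The one thing to correct is your diagnosis of the constant. You already use the same BDG constant $4p$ as the paper, so no sharper BDG inequality is needed. The factor $2$ in your absorption coefficient, and hence the threshold $8p^2G_\sigma^2$ instead of $2p^2G_\sigma^2$, comes solely from $(a+b)^2\le 2a^2+2b^2$.

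The paper avoids this by not squaring. Set $\mathcal{N}_T:=\sup_{t\le T}\sup_x\|\tilde v(t,x)\|_{L^p(\Omega)}$ and apply the triangle inequality in $L^p(\Omega)$ and $\sqrt{a+b}\le\sqrt a+\sqrt b$. This gives $\mathcal{N}_T\le\sup_x|u_0(x)|+(\widetilde C C_\sigma)^{1/2}+(\widetilde C G_\sigma)^{1/2}\mathcal{N}_T$ with $\widetilde C=2p/\sqrt{2(m+\gamma)}$. Then $\widetilde C G_\sigma<1$ if and only if $m+\gamma>2p^2G_\sigma^2$, which the hypothesis guarantees.

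Your $(1+\epsilon)$-split $(a+b)^2\le(1+\epsilon^{-1})a^2+(1+\epsilon)b^2$ works equally well. It gives the coefficient $(1+\epsilon)\widetilde C G_\sigma$, which is below $1$ once $2(1+\epsilon)^2p^2G_\sigma^2<\gamma$. Such an $\epsilon$ exists precisely because the hypothesis $\gamma>2p^2G_\sigma^2$ is strict. With either fix your proof is complete.
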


\begin{proof}
The semigroup generated by $\frac{1}{2}\partial_{xx} - mI - \gamma I$ is $e^{-(m+\gamma) t}P_t$, where $P_t$ is defined in (\ref{260603.1931}). Hence, $\tilde{v}$ admits the following mild representation:
\begin{align*}
  \tilde{v}(t,x) = e^{-(m+\gamma)t}P_t u_0(x) + \int_{0}^{t}\int_{\mathbb{R}} e^{-(m+\gamma)(t-s)}p_{t-s}(x,y)\sigma(\tilde{v}(s,y)) W(ds,dy).
\end{align*}
Set
\[
\mathcal{N}_T(\tilde{v}):= \sup_{0\leq t\leq T}\sup_{x\in\mathbb{R}}\Vert \tilde{v}(t,x)\Vert_{L^p(\Omega)}.
\]
Then
\begin{align}\label{260603.2049}
  \mathcal{N}_T(\tilde{v}) \leq & \sup_{0\leq t\leq T}\sup_{x\in\mathbb{R}}\big[ e^{-(m+\gamma)t}|P_t u_0(x)| \big] \nonumber\\
  & + \sup_{0\leq t\leq T}\sup_{x\in\mathbb{R}}\left\Vert \int_{0}^{t}\int_{\mathbb{R}} e^{-(m+\gamma)(t-s)}p_{t-s}(x,y)\sigma(\tilde{v}(s,y)) W(ds,dy) \right\Vert_{L^p(\Omega)} \nonumber\\
  =: & I(T) + II(T).
\end{align}
By the contractivity of $P_t$, we have
\begin{align}\label{260603.2156}
  I(\infty)\leq \sup_{x\in\mathbb{R}} |u_0(x)|.
\end{align}  
The term $II$ can be estimated as follows.
\begin{align}\label{260604.1025}
  II(T)^2 = & \sup_{0\leq t\leq T}\sup_{x\in\mathbb{R}} \left\{\mathbb{E}\left| \int_{0}^{t}\int_{\mathbb{R}} e^{-(m+\gamma)(t-s)}p_{t-s}(x,y)\sigma(\tilde{v}(s,y)) W(ds,dy) \right|^p\right\}^{\frac{2}{p}}  \nonumber\\
  \leq & \sup_{0\leq t\leq T}\sup_{x\in\mathbb{R}} \left\{ (4p)^{\frac{p}{2}} \mathbb{E}\left( \int_{0}^{t}\int_{\mathbb{R}} e^{-2(m+\gamma)(t-s)}p_{t-s}(x,y)^2\sigma(\tilde{v}(s,y))^2 dsdy \right)^{\frac{p}{2}}\right\}^{\frac{2}{p}}  \nonumber\\
  \leq & 4p \sup_{0\leq t\leq T}\sup_{x\in\mathbb{R}} \int_{0}^{t}\int_{\mathbb{R}} e^{-2(m+\gamma)(t-s)}p_{t-s}(x,y)^2 \Vert \sigma(\tilde{v}(s,y))^2 \Vert_{L^{\frac{p}{2}}(\Omega)}dsdy \nonumber\\
  \leq & 4p \sup_{0\leq t\leq T}\sup_{x\in\mathbb{R}} \int_{0}^{t} e^{-2(m+\gamma)(t-s)} \left(\int_{\mathbb{R}} p_{t-s}(x,y)^2 dy\right) \sup_{y\in\mathbb{R}}\Vert \sigma(\tilde{v}(s,y))^2 \Vert_{L^{\frac{p}{2}}(\Omega)}ds  \nonumber\\
  \leq & 4p \sup_{0\leq t\leq T} \int_{0}^{t} e^{-2(m+\gamma)(t-s)} \frac{1}{2\sqrt{\pi (t-s)}} \sup_{y\in\mathbb{R}}\Vert C_{\sigma} + G_{\sigma}|\tilde{v}(s,y)|^2 \Vert_{L^{\frac{p}{2}}(\Omega)}ds  \nonumber\\
  \leq & \frac{2pC_{\sigma}}{\sqrt{\pi }}  \sup_{0\leq t\leq T} \int_{0}^{t} e^{-2(m+\gamma)(t-s)} \frac{1}{\sqrt{t-s}} ds \nonumber\\
   & + \frac{2pG_{\sigma}}{\sqrt{\pi }} \sup_{0\leq t\leq T} \int_{0}^{t} e^{-2(m+\gamma)(t-s)} \frac{1}{\sqrt{ t-s}} \sup_{y\in\mathbb{R}}\Vert \tilde{v}(s,y) \Vert^2_{L^p(\Omega)}ds  \nonumber\\
   \leq & \frac{2pC_{\sigma}}{\sqrt{\pi }}  \int_{0}^{\infty} e^{-2(m+\gamma)s} \frac{1}{\sqrt{s}} ds \nonumber\\
   & + \frac{2pG_{\sigma}}{\sqrt{\pi }} \int_{0}^{\infty} e^{-2(m+\gamma)s} \frac{1}{\sqrt{ s}} ds \times\mathcal{N}_T(\tilde{v})^2,
\end{align}
where we used the Burkholder-Davis-Gundy inequality (see Theorem B.1 in \cite{K14}) and (\ref{260603.2012}). Set
\begin{align}\label{260604.1050}
  \widetilde{C}:= \frac{2p}{\sqrt{\pi }} \int_{0}^{\infty} e^{-2(m+\gamma)s} \frac{1}{\sqrt{ s}} ds = \frac{2p}{\sqrt{2(m+\gamma) }}.
\end{align}
%
By (\ref{260604.1025}), 
\begin{align}\label{260603.2044}
  II(T)\leq & (\widetilde{C} C_{\sigma})^{\frac{1}{2}}
    + (\widetilde{C}G_{\sigma})^{\frac{1}{2}} \times\mathcal{N}_T(\tilde{v}).
\end{align}
%
Combining (\ref{260603.2049}), (\ref{260603.2156}) and (\ref{260603.2044}) yields
\begin{align}\label{260605.1228}
  \mathcal{N}_T(\tilde{v}) \leq \sup_{x\in\mathbb{R}}|u_0(x)| + (\widetilde{C}C_{\sigma})^{\frac{1}{2}}
    + (\widetilde{C}G_{\sigma})^{\frac{1}{2}} \times\mathcal{N}_T(\tilde{v}).
\end{align}
By (\ref{260604.1050}), the assumption $\gamma > 2p^2 G_{\sigma}^2$ implies $\widetilde{C}G_{\sigma}<1$. Since $\sigma$ is Lipschitz, it is well known that the solution $\tilde{v}$ to (\ref{260603.1828}) satisfies $\mathcal{N}_T(\tilde{v})<\infty$ for any $T>0$. 
Therefore, (\ref{260605.1228}) gives
\[
\sup_{T\geq 0}\mathcal{N}_T(\tilde{v})<\infty,
\]
which proves (\ref{260603.1925}).
\end{proof}

Recall condition (S3), and set 
\begin{align}\label{260607.1423}
  C_{b,\theta}:= C_b + \theta + \sup_{|x|\leq 1}|b(x)|. 
\end{align}
It follows from (\ref{260603.2134}) that
\begin{align}\label{260604.1239}
  \begin{cases}
    b(u)\leq -\theta u + C_{b,\theta}, & \forall\ u\geq 0, \\
    b(u)\geq -\theta u - C_{b,\theta}, & \forall\ u\leq 0.
  \end{cases}
\end{align}

\begin{lemma}\label{260604.1225}
 Assume that (S1)--(S3) and (H5) hold. Define
  \begin{align*}
    h_{+}(u):=
  \begin{cases}
    C_{b,\theta}, & u\geq 0,\\
    -\gamma u + \sup_{u\leq v\leq 0}\big(b(v) - b(0) + C_{b,\theta}\big), & u\leq 0,
  \end{cases}
  \end{align*}
where $\gamma > 2(2\nu)^2 G_{\sigma}^2$. Let $v_{+}$ be the unique solution to equation (\ref{260603.1828}) with $\tilde{b}=h_{+}$. If
\[
m >\frac{G_{\sigma}^2}{8},
\]
then
\begin{align}\label{260603.2314}
  \sup_{t\geq 0}\sup_{x\in\mathbb{R}}\mathbb{E}|v_{+}(t,x)|^2 <\infty.
\end{align}

\end{lemma}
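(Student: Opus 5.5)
The idea is to squeeze $v_{+}$ between two processes that we can control. From below we use the linear process $\tilde{v}$ of Lemma \ref{260603.2255}. From above we use the mild representation of $v_{+}$, after replacing the drift by something evaluated along $\tilde v$. We then close with a fixed-point estimate on $\mathcal{N}_T(v_+):=\sup_{0\le t\le T}\sup_x \Vert v_+(t,x)\Vert_{L^2(\Omega)}$, in the same way as in Lemma \ref{260603.2255}.

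\textbf{Structural facts about $h_+$.} I first record three properties.
\begin{itemize}
\item $h_+$ is nonincreasing. On $u\ge0$ it is constant. On $u\le0$, both $-\gamma u$ and $\sup_{u\le v\le 0}(\cdots)$ increase as $u$ decreases, and the value at $u=0^-$ is $C_{b,\theta}$.
\item $h_+(u)\ge -\gamma u$ for all $u$. For $u\ge0$ we have $-\gamma u\le 0\le C_{b,\theta}$. For $u\le0$, taking $v=0$ in the sup gives $h_+(u)\ge -\gamma u+C_{b,\theta}$.
\item By (H5) with $y=0$, $|b(v)-b(0)|\le L_b|v|(1+|v|^{\nu-1})$, so $|h_+(u)|\le C(1+|u|^{\nu})$.
\end{itemize}

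\textbf{Lower bound.} Let $\tilde v$ solve (\ref{260603.1828}) with $\tilde b(v)=-\gamma v$ and the same initial datum. Since $h_+\ge -\gamma\,\mathrm{id}$, the comparison principle gives $v_+\ge\tilde v$. Apply Lemma \ref{260603.2255} with $p=2\nu\ge2$; this is allowed because $\gamma>2(2\nu)^2G_\sigma^2$. It yields $K:=\sup_{t,x}\mathbb{E}|\tilde v(t,x)|^{2\nu}<\infty$.

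\textbf{Upper bound.} Write $v_+$ in mild form with semigroup $e^{-mt}P_t$. Since $v_+\ge\tilde v$ and $h_+$ is nonincreasing, we have $h_+(v_+)\le h_+(\tilde v)\le C(1+|\tilde v|^\nu)$. Hence
\[
\tilde v(t,x)\le v_+(t,x)\le e^{-mt}P_tu_0(x)+A(t,x)+Z(t,x),
\]
where
\[
A(t,x)=C\int_0^t\!\!\int_{\mathbb{R}} e^{-m(t-s)}p_{t-s}(x,y)\bigl(1+|\tilde v(s,y)|^\nu\bigr)\,dy\,ds,
\]
and $Z$ is the stochastic convolution $\int_0^t\int e^{-m(t-s)}p_{t-s}(x,y)\sigma(v_+(s,y))W(ds,dy)$. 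Consequently
\[
|v_+|\le|\tilde v|+\sup_x|u_0|+|A|+|Z|.
\]

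\textbf{Estimating the three terms.} The first is bounded by $K^{1/(2\nu)}$. For $A$, Minkowski's inequality gives $\Vert A(t,x)\Vert_{L^2(\Omega)}\le C(1+K^{1/2})/m$. For $Z$, the It\^o isometry, (\ref{260603.2012}) and (S2) give
\[
\mathbb{E}|Z(t,x)|^2\le \int_0^\infty \frac{e^{-2mr}}{2\sqrt{\pi r}}\,dr\,\bigl(C_\sigma+G_\sigma\mathcal{N}_T(v_+)^2\bigr)=\frac{C_\sigma+G_\sigma\mathcal{N}_T(v_+)^2}{2\sqrt{2m}}.
\]
Altogether,
\[
\mathcal{N}_T(v_+)\le C_0+\Bigl(\tfrac{G_\sigma}{2\sqrt{2m}}\Bigr)^{1/2}\mathcal{N}_T(v_+),
\]
with $C_0$ independent of $T$. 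The condition $m>G_\sigma^2/8$ is exactly what makes the coefficient strictly less than $1$.

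\textbf{Closing the argument.} The main obstacle is this last step: to absorb the right-hand side we need the a priori finiteness $\mathcal{N}_T(v_+)<\infty$ for each fixed $T$. The drift $h_+$ is only locally Lipschitz with polynomial growth, so this is not immediate from Lipschitz theory. I would get it from the same sandwich on $[0,T]$. The bounds $\tilde v\le v_+\le e^{-mt}P_tu_0+A+Z$ hold pathwise, and $Z$ has finite second moments locally in time by the well-posedness theory (Theorem 1 of \cite{AM03}). If needed, one can localize with a stopping argument or truncate $\sigma$ and pass to the limit. With $\mathcal{N}_T(v_+)<\infty$ in hand, the inequality gives $\sup_T\mathcal{N}_T(v_+)<\infty$, which is (\ref{260603.2314}).
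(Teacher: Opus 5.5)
\textbf{There is a genuine gap at the closing step.} Your core estimates match the paper's: the comparison $v_+\ge\tilde v$, the monotonicity of $h_+$ giving $h_+(v_+)\le h_+(\tilde v)$, Lemma \ref{260603.2255} with $p=2\nu$, and the It\^o-isometry constant $\frac{1}{2\sqrt{2m}}$ that makes $m>G_\sigma^2/8$ the absorption condition. The gap is exactly the step you flag, and your proposed way around it does not work as stated.

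The inequality $\mathcal{N}_T(v_+)\le C_0+q\,\mathcal{N}_T(v_+)$, with $q=(G_\sigma/(2\sqrt{2m}))^{1/2}$, is empty unless $\mathcal{N}_T(v_+)<\infty$ is already known. The sandwich does not supply this. Its upper side contains $Z$, and $\sup_{t\le T,x}\mathbb{E}|Z(t,x)|^2$ is controlled by $\sup_{s\le T,y}\mathbb{E}|\sigma(v_+(s,y))|^2$, i.e.\ by $\mathcal{N}_T(v_+)$ itself, so the argument is circular.

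Appealing to the well-posedness theory of \cite{AM03} does not help. That theory is set in weighted spaces, and weighted integral bounds in $y$ do not control $\int p_{t-s}(x,y)^2\,\mathbb{E}|\sigma(v_+(s,y))|^2\,dy$ uniformly in $x$. The naive bound has a non-integrable $(t-s)^{-1}$ singularity and grows like $e^{\lambda|x|}$.

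Your suggested localizations are not routine here either. A stopping time built from $\sup_x|v_+(t,x)|$ is degenerate on $\mathbb{R}$, since that supremum is typically infinite. Truncating $\sigma$ requires proving $v_+^R\to v_+$ for an equation with polynomially growing drift on the whole line, a nontrivial stability result you do not supply.

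\textbf{The missing idea is to freeze the nonlinear drift.} Regard $v_+$ as the solution of the auxiliary equation (\ref{260605.1008}), whose drift is the given adapted field $h_+(v_+(t,x))$. This equation is Lipschitz in the unknown. Its drift contribution is bounded using only $v_+\ge\tilde v$ and the moments of $\tilde v$, with no information on moments of $v_+$; this is your bound on $A$.

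Then run the Picard iteration only in the $\sigma$-term, starting from the bounded $u_0$:
\begin{itemize}
\item Each iterate satisfies $\mathcal{N}(v_+^{n})<\infty$ by induction.
\item Your It\^o-isometry computation gives the recursion $\mathcal{N}(v_+^{n+1})\le C+q\,\mathcal{N}(v_+^n)$ with $q<1$. Hence $\sup_n\mathcal{N}(v_+^n)<\infty$, and only finite quantities are ever absorbed.
\item Because the frozen drift is the same in every iterate, $v_+^{n+1}-v_+^n$ involves only $\sigma(v_+^n)-\sigma(v_+^{n-1})$. The usual Lipschitz argument therefore gives convergence in $L^2(\Omega)$.
\item The limit is $v_+$ by uniqueness for the frozen Lipschitz equation. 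Fatou's lemma then transfers the uniform bound to $v_+$.
\end{itemize}
Inserting this step in place of your appeal to well-posedness or localization completes your argument.
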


\begin{proof}
By (H5), $h_+$ also satisfies the polynomial local Lipschitz condition (H5) as $b$. Moreover,
\[
uh_{+}(u) \leq \frac{1}{2}(|u|^2 + C_{b,\theta}^2).
\] 
Therefore, Theorem 1 of \cite{AM03} (see also Theorem 3.4.1 of \cite{MZ99}) applies to (\ref{260603.1828}) with $\tilde{b}=h_{+}$, and yields the existence and uniqueness of the solution $v_{+}$. It remains to prove the uniform estimate (\ref{260603.2314}). 

Consider an auxiliary equation
\begin{align}\label{260605.1008}
	\left\{
	\begin{aligned}
		d v(t,x) &=[\frac{1}{2}\partial_{xx} v(t,x) - mv(t,x)]dt + h_+(v_+(t,x))dt + \sigma(v(t,x)) W(dt,dx),\\
		v(0,x)&=u_0(x), \quad x\in \mathbb{R}.
	\end{aligned}
	\right.
\end{align}
Since $\sigma$ is Lipschitz and the drift term $h_+(v_+)$ does not depend on $v$, the above equation has a unique solution. Meanwhile, $v_+$ also satisfies the above equation, which implies that the unique solution to (\ref{260605.1008}) is exactly $v_+$. 
Now construct a sequence of approximating solution $v_+^n$. Set $v_+^0(t) = u_0$ for any $t\geq 0$, and then iteratively define
\begin{align*}
  v_+^{n+1}(t,x) = & e^{-mt}P_t u_0(x) + \int_{0}^{t}\int_{\mathbb{R}} e^{-m(t-s)}p_{t-s}(x,y)h_+(v_+(s,y))dsdy \nonumber\\
& + \int_{0}^{t}\int_{\mathbb{R}} e^{-m(t-s)}p_{t-s}(x,y)\sigma(v_+^n(s,y))W(ds,dy).
\end{align*}
Then 
\begin{align}\label{260605.1117}
  \lim_{n\rightarrow\infty} v_+^{n}(t,x) = v_+(t,x) \quad\text{ in } L^2(\Omega),\quad \forall\ (t,x)\in [0,\infty)\times\mathbb{R}.
\end{align}
See, e.g., Chapter 3 of Walsh \cite{Wa}. 
%
%

Set
\[
\mathcal{N}(v_+^n):= \sup_{t\geq 0}\sup_{x\in\mathbb{R}}\Vert v_+^n(t,x)\Vert_{L^2(\Omega)}.
\]
Then
\begin{align}\label{260604.1046}
  \mathcal{N}(v_+^{n+1}) \leq & \sup_{t\geq 0}\sup_{x\in\mathbb{R}}\big[ e^{-mt}|P_t u_0(x)| \big] \nonumber\\
  & + \sup_{t\geq 0}\sup_{x\in\mathbb{R}}\left\Vert \int_{0}^{t}\int_{\mathbb{R}} e^{-m(t-s)}p_{t-s}(x,y) h_+(v_+(s,y)) dsdy \right\Vert_{L^2(\Omega)} \nonumber\\
  & + \sup_{t\geq 0}\sup_{x\in\mathbb{R}}\left\Vert \int_{0}^{t}\int_{\mathbb{R}} e^{-m(t-s)}p_{t-s}(x,y)\sigma(v_{+}^n(s,y)) W(ds,dy) \right\Vert_{L^2(\Omega)} \nonumber\\
  =: & I_1 + I_2 + I_3.
\end{align}

We first prove that $I_2<\infty$ by using the comparison principle. 
\begin{align}\label{260603.2248}
  I_2\leq & \sup_{t\geq 0}\sup_{x\in\mathbb{R}} \int_{0}^{t}\int_{\mathbb{R}} e^{-m(t-s)}p_{t-s}(x,y) \Vert h_+(v_+(s,y))\Vert_{L^2(\Omega)} dsdy \nonumber\\
  \leq &\sup_{t\geq 0}\int_{0}^{t} e^{-m(t-s)} \sup_{y\in\mathbb{R}}\Vert h_+(v_+(s,y))\Vert_{L^2(\Omega)} ds.
\end{align}
By the definition of $h_+$ and (H5), 
\begin{align}
\label{260603.2241} |h_+(u)|\leq & C_{b,\theta} + (\gamma + L_b)|u| + L_b |u|^{\nu}, \quad \forall\ u\in\mathbb{R},\\
\label{260603.2242}  h_+(u) \geq & -\gamma u,\quad \forall\ u\in\mathbb{R}.
\end{align}
It follows from (\ref{260603.2242}) and the comparison principle that
\[
v_+(t,x) \geq \tilde{v}(t,x), \quad \forall\ (t,x)\in [0,\infty)\times\mathbb{R}.
\]
Since $h_+$ is nonincreasing and takes values in $[C_{b,\theta},\infty)$, (\ref{260603.2241}) gives
\begin{align}\label{260603.2247}
  |h_+(v_+(s,y))| \leq |h_+(\tilde{v}(s,y))| \leq C_{b,\theta} + (\gamma + L_b)|\tilde{v}(s,y)| + L_b |\tilde{v}(s,y)|^{\nu}.
\end{align}
Substituting (\ref{260603.2247}) into (\ref{260603.2248}) yields
\begin{align*}
  I_2\leq & \sup_{t\geq 0}\int_{0}^{t} e^{-m(t-s)} \sup_{y\in\mathbb{R}}\Vert C_{b,\theta} + (\gamma + L_b)|\tilde{v}(s,y)| + L_b |\tilde{v}(s,y)|^{\nu}\Vert_{L^2(\Omega)} ds  \\
  \leq & C_{b,\theta}\sup_{t\geq 0}\int_{0}^{t} e^{-m(t-s)}  ds + (\gamma + L_b) \sup_{t\geq 0}\int_{0}^{t} e^{-m(t-s)} \sup_{y\in\mathbb{R}}\Vert \tilde{v}(s,y) \Vert_{L^2(\Omega)} ds  \\
  & + L_b \sup_{t\geq 0}\int_{0}^{t} e^{-m(t-s)} \sup_{y\in\mathbb{R}}\Vert  \tilde{v}(s,y)\Vert^{\nu}_{L^{2\nu}(\Omega)} ds\nonumber\\
  < & \infty,
\end{align*}
since Lemma \ref{260603.2255} gives 
\[
\sup_{s\geq 0}\sup_{y\in\mathbb{R}} \big[ \Vert \tilde{v}(s,y) \Vert_{L^{2}(\Omega)} + \Vert \tilde{v}(s,y) \Vert_{L^{2\nu}(\Omega)}\big]<\infty.
\]

The terms $I_1$ and $I_3$ can be estimated as in (\ref{260603.2156}) and (\ref{260603.2044}). This gives
\begin{gather}
\label{260603.2311}  I_1 \leq \sup_{x\in\mathbb{R}}|u_0(x)|, \\
\label{260603.2306}  I_3  \leq (C_+ C_{\sigma})^{\frac{1}{2}} + (C_+ G_{\sigma})^{\frac{1}{2}} \mathcal{N}(v_+^n),
\end{gather}
where
\[
C_+ := \frac{1}{2\sqrt{2m}}
\]
is obtained by using the It\^{o} isometry instead of the Burkholder-Davis-Gundy inequality. Combining (\ref{260604.1046}), (\ref{260603.2311}) and (\ref{260603.2306}) yields
\begin{align}\label{260605.1114}
  \mathcal{N}(v_+^{n+1}) \leq \sup_{x\in\mathbb{R}}|u_0(x)| + I_2 + (C_+ C_{\sigma})^{\frac{1}{2}} + (C_+ G_{\sigma})^{\frac{1}{2}} \mathcal{N}(v_+^n).
\end{align}
By (\ref{260603.2012}), 
\begin{align*}
  \mathcal{N}(v_+^{1}) 
  \leq \sup_{x\in\mathbb{R}}|u_0(x)| + I_2  + \sup_{y\in\mathbb{R}}|\sigma(u_0(y))|\times\left(\int_0^{\infty}e^{-2ms}\frac{1}{2\sqrt{\pi s}}ds\right)^{\frac{1}{2}}.
\end{align*}
Since $u_0$ is bounded and $I_2<\infty$, the preceding estimate gives $\mathcal{N}(v_+^{1})<\infty$. 
Since $m > \frac{G_{\sigma}^2}{8} $ is equivalent to $C_+ G_{\sigma}<1$, iterating (\ref{260605.1114}) yields
\[
\sup_{n\geq 1}\mathcal{N}(v_+^n)<\infty.
\]
By Fatou's lemma and (\ref{260605.1117}),
\[
\mathcal{N}(v_+) \leq \liminf_{n\rightarrow\infty} \mathcal{N}(v_+^n)<\infty,
\]
which proves (\ref{260603.2314}).
\end{proof}

\begin{lemma}\label{260604.1327}
Assume that (S1)--(S3) and (H5) hold. Define
  \begin{align*}
    h_{-}(u):=
  \begin{cases}
        -\gamma u + \inf_{0\leq v\leq u}\big(b(v) - b(0) - C_{b,\theta}\big), & u\geq 0, \\
        - C_{b,\theta}, & u\leq 0,\\
  \end{cases}
  \end{align*}
where $\gamma > 2(2\nu)^2 G_{\sigma}^2$. Let $v_{-}$ be the unique solution to (\ref{260603.1828}) with $\tilde{b}=h_{-}$. If
\[
m >\frac{G_{\sigma}^2}{8},
\]
then
\begin{align*}
  \sup_{t\geq 0}\sup_{x\in\mathbb{R}}\mathbb{E}|v_{-}(t,x)|^2 <\infty.
\end{align*}

\end{lemma}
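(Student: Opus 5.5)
The plan is to reduce Lemma \ref{260604.1327} to Lemma \ref{260604.1225} by the symmetry $u\mapsto -u$. Set $w:=-v_-$. Then $w$ solves (\ref{260603.1828}) with initial value $-u_0$, drift $\hat b(w):=-h_-(-w)$, diffusion coefficient $\hat\sigma(w):=-\sigma(-w)$, and driving noise $-W$. Since $-W$ is again a space-time white noise, $\mathbb{E}|v_-(t,x)|^2=\mathbb{E}|w(t,x)|^2$. The two hypotheses carry over: $\hat\sigma$ is Lipschitz with the same constant and satisfies (S2) with the same $C_\sigma,G_\sigma$, and $-u_0$ is bounded and continuous.

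Next I would compute $\hat b$. For $w\le 0$ we have $-w\ge 0$, so
\[
\hat b(w)=-\gamma w-\inf_{0\le v\le -w}\big(b(v)-b(0)-C_{b,\theta}\big)=-\gamma w+\sup_{w\le v'\le 0}\big(\hat b_0(v')-\hat b_0(0)+C_{b,\theta}\big),
\]
where $\hat b_0(u):=-b(-u)$ and we substitute $v=-v'$. For $w\ge 0$ we get $\hat b(w)=C_{b,\theta}$. Thus $\hat b$ is exactly the function $h_+$ built from $\hat b_0$ in place of $b$. Moreover $\hat b_0$ satisfies (H5) with the same $L_b,\nu$, and (S3) with the same $C_b,\theta$, because $u\hat b_0(u)=(-u)b(-u)$. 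Finally $C_{b,\theta}$ is unchanged, since $\sup_{|x|\le1}|\hat b_0(x)|=\sup_{|x|\le1}|b(x)|$.

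Lemma \ref{260604.1225}, applied to the data $(\hat b_0,\hat\sigma,-u_0)$ with the same $\gamma$ and $m$, then gives $\sup_{t,x}\mathbb{E}|w|^2<\infty$, which is the claim. Strictly, $w$ also solves the equation driven by $-W$; either the lemma holds for any white noise, or one notes that the law of the solution is unchanged.

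The only real work is the bookkeeping in the middle paragraph: checking that the infimum in $h_-$ becomes the supremum in $h_+$ with the correct sign, and that every constant is reused unchanged.

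Alternatively, one can repeat the proof of Lemma \ref{260604.1225} directly. Use the Picard-type iteration with the frozen drift $h_-(v_-)$. The comparison principle gives $v_-\le\tilde v$, because $h_-(u)\le-\gamma u$. Since $h_-$ is nonincreasing, $|h_-(v_-)|\le|h_-(\tilde v)|$, and this is bounded by Lemma \ref{260603.2255} with $p=2\nu$. The stochastic term is handled by the It\^o isometry, where $m>G_\sigma^2/8$ gives a contraction.
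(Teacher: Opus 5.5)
Your proposal is correct, and your main route differs from the paper's. The paper does not use reflection. It says the proof is identical to that of Lemma~\ref{260604.1225} except for the bound on $I_2$, and then lists the mirrored facts directly: $|h_-(u)|\le C_{b,\theta}+(\gamma+L_b)|u|+L_b|u|^{\nu}$; $h_-(u)\le-\gamma u$, hence $v_-\le\tilde v$ by comparison; and $h_-$ is nonincreasing with values in $(-\infty,-C_{b,\theta}]$, hence $|h_-(v_-)|\le|h_-(\tilde v)|$.

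That is exactly your ``alternatively'' paragraph, with one gap. There you obtain $|h_-(v_-)|\le|h_-(\tilde v)|$ from monotonicity alone. Monotonicity only gives $h_-(v_-)\ge h_-(\tilde v)$, which does not control absolute values. You also need both values to have the same sign, here $h_-\le -C_{b,\theta}\le 0$.

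Your reflection $w=-v_-$ instead makes Lemma~\ref{260604.1327} a formal corollary of Lemma~\ref{260604.1225}. Your computation that $-h_-(-w)$ is precisely $h_+$ built from $\hat b_0(u)=-b(-u)$ is right. So is your check that (S1)--(S3), (H5) and $C_{b,\theta}$ are unchanged under the reflection. All the sign and monotonicity bookkeeping, including the point above, is then inherited rather than redone. The paper's version is shorter on the page; yours replaces the mirrored inequalities by an invariance check that is easier to verify.

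One small slip: $w$ satisfies $dw=\cdots-\sigma(-w)\,W(dt,dx)$. So take either $\hat\sigma(w)=-\sigma(-w)$ driven by the same $W$, or $\hat\sigma(w)=\sigma(-w)$ driven by $-W$, not both. The first choice removes the need for your remark about $-W$ entirely. In either case $\hat\sigma$ has the same Lipschitz constant and the same $C_\sigma,G_\sigma$, so nothing else changes.
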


\begin{proof}
  The proof is the same as that of Lemma \ref{260604.1225}, except for the estimate of $I_2$. We give the necessary modifications.
  
By the definition of $h_{-}$ and condition (H5), 
\begin{align}
\label{260604.1216} |h_{-}(u)|\leq & C_{b,\theta} + (\gamma + L_b)|u| + L_b |u|^{\nu}, \quad \forall\ u\in\mathbb{R},\\
\label{260604.1215}  h_{-}(u) \leq & -\gamma u,\quad \forall\ u\in\mathbb{R}.
\end{align}
It follows from (\ref{260604.1215}) and the comparison principle that
\[
v_{-}(t,x) \leq \tilde{v}(t,x), \quad \forall\ (t,x)\in [0,\infty)\times\mathbb{R}.
\]
Since $h_-$ is nonincreasing and takes values in $(-\infty, -C_{b,\theta}]$, by (\ref{260604.1216}) we still get
\[
  |h_{-}(v_{-}(s,y))| \leq |h_{-}(\tilde{v}(s,y))| \leq C_{b,\theta} + (\gamma + L_b)|\tilde{v}(s,y)| + L_b |\tilde{v}(s,y)|^{\nu}.
\]
\end{proof}

\begin{proof}[Proof of Proposition \ref{260604.1227}]
By Theorem 1 of \cite{AM03} (see also Theorem 3.4.1 of \cite{MZ99}), there exists a unique solution $u$ to (\ref{3.1}) under conditions (S1)--(S3) and (H5).  
Let $b_{\theta}(u) = b(u)+\theta u$. 
Then $u$ also solves (\ref{260603.1828}) with $m=\theta>\frac{G_{\sigma}^2}{8}$ and $\tilde{b}(u)=b_{\theta}(u)$. By (\ref{260604.1239}) and the definitions of $h_+$ and $h_{-}$, a direct computation gives
\begin{align*}
  h_{-}(u) \leq b_{\theta}(u)\leq h_+(u), \quad \forall \ u\in\mathbb{R}.
\end{align*}
The comparison principle then yields
\[
v_{-}(t,x) \leq u(t,x) \leq v_{+}(t,x), \quad \forall \ (t,x)\in [0,\infty)\times\mathbb{R}.
\]
Therefore, by Lemma \ref{260604.1225} and \ref{260604.1327}, 
\[
\sup_{t\geq 0}\sup_{x\in\mathbb{R}} \mathbb{E} |u(t,x)|^2 \leq \sup_{t\geq 0}\sup_{x\in\mathbb{R}} \mathbb{E} |v_+(t,x)|^2 + \sup_{t\geq 0}\sup_{x\in\mathbb{R}} \mathbb{E} |v_{-}(t,x)|^2 <\infty,
\]
which completes the proof of Proposition \ref{260604.1227}.
\end{proof}

\section{Irreducibility}
In this section, we prove the irreducibility for the stochastic reaction-diffusion equation (\ref{3.1}). We recall the assumptions used in this section.

\vskip 0.4cm

\noindent {\bf (H1)} $b$ is Lipschitz, i.e.,  there exists a constant $L_{b}$ such that
\begin{align*}
|b(x) - b(y)| \leq L_{b} |x-y|, \quad\forall\  x,y\in\mathbb{R}.
\end{align*}
\noindent {\bf (H2)} $\sigma$ is Lipschitz, i.e.,  there exists a constant $L_{\sigma}$ such that
\begin{align*}
|\sigma(x) - \sigma(y)| \leq L_{\sigma} |x-y|, \quad\forall\  x,y\in\mathbb{R}.
\end{align*}
\noindent {\bf (H3)} $\sigma$ is non-degenerate, i.e. there is a constant $c>0$ such that
\begin{align*}
  |\sigma(x)|\geq c>0, \quad\forall\  x\in\mathbb{R}.
\end{align*}
\begin{theorem}\label{260312.1046}
Under (H1)--(H3), the solution to equation (\ref{3.1}) is irreducible in $L^2_{\lambda}$, for every $\lambda>0$. More precisely, for any initial value $f\in L^2_{\lambda}$, any $T,r>0$ and $g\in L^2_{\lambda}$,
\begin{align}\label{260210.1127}
  \mathbb{P}(\Vert u(T) - g\Vert_{L^2_{\lambda}}<r)>0,
\end{align}
where $u(t,x)$ is the solution to (\ref{3.1}) with initial value $f$.
\end{theorem}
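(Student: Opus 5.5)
The plan is to use a control argument combined with Girsanov's theorem for space-time white noise. We construct an adapted, random control $h(t,x)$ and consider the controlled equation
\begin{align*}
dv = \tfrac12\partial_{xx}v\,dt + b(v)\,dt + \sigma(v)h(t,x)\,dt + \sigma(v)\,W(dt,dx),\qquad v(0)=f.
\end{align*}
If $h$ satisfies a Novikov-type bound such as $\int_0^T\int_{\mathbb{R}} h^2\,dx\,dt\le K$ almost surely, then $\widetilde W(dt,dx)=W(dt,dx)+h\,dt\,dx$ is a space-time white noise under an equivalent measure $\mathbb{Q}$. Under $\mathbb{Q}$ the process $v$ solves (\ref{3.1}), so $\mathcal{L}_{\mathbb{Q}}(v)=\mathcal{L}_{\mathbb{P}}(u)$. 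Equivalence of measures then reduces (\ref{260210.1127}) to showing $\mathbb{P}(\Vert v(T)-g\Vert_{L^2_\lambda}<r)>0$.

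First we reduce to nice targets and initial data. By density, and since (H1)--(H2) give $L^2(\Omega;L^2_\lambda)$-continuity of the solution in its initial value uniformly on $[0,T]$, it suffices to take $g$ smooth with compact support. The initial value $f\in L^2_\lambda$ is general, and it cannot be made compactly supported without changing the law. However, the controlled dynamics can be used to steer: choose a deterministic smooth path $\psi(t)$ interpolating from $P_t f$-type behaviour to $g$, for instance
\begin{align*}
\psi(t)=\Big(1-\tfrac tT\Big)P_tf+\tfrac tT\,g ,
\end{align*}
smoothed near $t=0$. The controlled equation is designed so that $w=v-\psi$ satisfies an equation with a stabilizing feedback. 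The difficulty is that the needed control $\big(\partial_t\psi-\frac12\partial_{xx}\psi-b(v)\big)/\sigma(v)$ must be square integrable on all of $\mathbb{R}$, while $v$ lives only in $L^2_\lambda$. This is the main obstacle.

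To overcome it, the plan is to localize the control in both space and time. On $[T-\delta,T]$ we use a control supported in a large ball $[-R,R]$, where $R$ is chosen so that the $e^{-\lambda|x|}$-weighted mass outside $[-R,R]$ of both $v(T)$ and $g$ is less than $r/4$ with high probability. This is possible because $\mathbb{E}\Vert v(T)\Vert^2_{L^2_\lambda}<\infty$ uniformly, by the estimate (\ref{260208.1019}) and Lipschitz bounds. Inside the ball we use the feedback control
\begin{align*}
h=\frac{\chi_R(x)}{\sigma(v)}\Big[-b(v)+\mathcal{D}\psi-K(v-\psi)\Big]\mathbf 1_{[T-\delta,T]}(t),
\end{align*}
truncated by a stopping time when $\int\!\!\int h^2$ exceeds a level $M$. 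Here (H3) guarantees $1/\sigma$ is bounded, and (H1) gives $|b(v)|\le C(1+|v|)$. Hence $\int h^2\le C\int_{[-R,R]}(1+|v|^2)$, which is finite because $L^2_\lambda$ norms dominate $L^2([-R,R])$ norms with constant $e^{\lambda R}$.

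Then we estimate $\mathbb{E}\Vert v(T)-g\Vert^2_{L^2([-R,R],e^{-\lambda|x|})}$. With the strong damping $K$ and a short window $\delta$, the noise contribution over $[T-\delta,T]$ is $O(\delta^{1/2})$ by (\ref{260207.2020}), and the deterministic error is contracted by roughly $e^{-K\delta}$. Since the control alters only the drift, the mild formulation plus a Gronwall argument in $\mathcal{N}$-type norms, as in Theorem \ref{260206.2029}, yields $\mathbb{P}(\Vert v(T)-g\Vert_{L^2_\lambda}<r,\ \text{no truncation})>0$ for $K$ large, $\delta$ small, and $M$ large. The stopping-time truncation occurs with small probability by Chebyshev. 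Equivalence of $\mathbb{Q}$ and $\mathbb{P}$ then transfers positivity of this probability to $u$, proving (\ref{260210.1127}).
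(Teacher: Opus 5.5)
Your strategy is viable, but it takes a genuinely different route from the paper's. The paper uses an \emph{open-loop} control. For $t_1$ close to $T$ it replaces the random state $u(t_1)$ by $\hat u^{\varepsilon}(t_1)=\mathbf{1}_{\{\Vert u(t_1)\Vert_{L^2_\lambda}\le 1/\varepsilon\}}\big(u(t_1)e^{-(T-t_1)|\cdot|^2}\big)*\phi_\varepsilon$, which is smooth and bounded in $L^2(\mathbb{R})$ uniformly in $\omega$. It then steers along the explicit path $Y^\varepsilon=Z_1+Z_2^\varepsilon$, where $Z_1$ is the deterministic solution from $0$ (it absorbs the non-integrable part $b(0)$ of the drift) and $Z_2^\varepsilon$ is the linear interpolation from $\hat u^\varepsilon(t_1)$ to $g$. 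The control $h^\varepsilon$ is $\mathcal{F}_{t_1}$-measurable and chosen so that $Y^\varepsilon$ solves the controlled PDE. Because $\Vert h^\varepsilon(t)\Vert_{L^2(\mathbb{R})}$ has a deterministic bound, Novikov holds without stopping times. Because the same $h^\varepsilon$ drives $u^\varepsilon$ and $Y^\varepsilon$, it cancels in $u^\varepsilon-Y^\varepsilon$, so a plain Lipschitz Gronwall estimate with constants independent of $t_1,\varepsilon$ suffices, even though $h^\varepsilon$ blows up as $\varepsilon\to0$, $t_1\to T$. The price is the approximation argument showing $\mathbb{E}\Vert\hat u^\varepsilon(t_1)-u(t_1)\Vert^2_{L^2_\lambda}\to0$. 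Your closed-loop high-gain feedback avoids approximating the random state, and your cutoff $\chi_R$ replaces the Gaussian factor and the $Z_1$-splitting. The price on your side is that the control no longer cancels, so the gain $K$ enters the dynamics, and two steps of your sketch need a different justification.

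\textbf{First step: $K$-uniform estimates.} These cannot come from ``the mild formulation plus Gronwall as in Theorem \ref{260206.2029}''. That theorem relies on (H4) and the comparison principle, neither of which is available here. If you use $P_t$ and treat $-K\chi_R(v-\psi)$ as a Lipschitz drift, the Gronwall constant grows like $e^{C(K\delta)^2}$ and the initial mismatch is not damped at all, which destroys the $e^{-K\delta}$ gain. Instead, note that $w=v-\psi$ solves $dw=[\frac12\partial_{xx}w-K\chi_Rw+(1-\chi_R)(b(v)-\partial_t\psi+\frac12\partial_{xx}\psi)]dt+\sigma(v)W(dt,dx)$. You must write this in mild form with the Feynman--Kac semigroup $S^K_t$ of $\frac12\partial_{xx}-K\chi_R$. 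Its kernel satisfies $0\le q^K_t\le p_t$, so \eqref{260207.1152}--\eqref{260207.2020} hold uniformly in $K$; this gives $K$-uniform moments of $v$ and the $\delta^{1/2}$ noise bound. For $|x|\le R-1$ the killing yields essentially a factor $e^{-K\delta}+C_Re^{-c/\delta}$, while near $\pm R$ and outside there is no damping.

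\textbf{Second step: tails off $[-R,R]$.} Smallness of the weighted mass of $v(T)$ outside $[-R,R]$ does not follow from a uniform bound on $\mathbb{E}\Vert v(T)\Vert^2_{L^2_\lambda}$. The process $v$ depends on $R$, and bounded moments do not give uniformly small tails. Instead:
\begin{itemize}
\item take $\psi\equiv g$ on $[T-\delta,T]$ (the interpolation from $P_tf$ plays no role);
\item choose $R$ with $\operatorname{supp}g\subset[-R+2,R-2]$ and $\sup_{t\le T}\mathbb{E}\int_{|x|>R-2}|u(t,x)|^2e^{-\lambda|x|}dx$ small, using continuity of $t\mapsto u(t)$ in $L^2(\Omega;L^2_\lambda)$ via \eqref{260313.1659};
\item control the outside and the boundary layer by $|S^K_\delta w(T-\delta)|\le P_\delta|w(T-\delta)|$ together with Gaussian off-diagonal decay; the remaining drift and noise terms are $O(\delta)$ and $O(\delta^{1/4})$ in $L^2(\Omega;L^2_\lambda)$.
\end{itemize}
Choose $R$, then $\delta$, then $K$, then $M$. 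Estimate the untruncated feedback solution and use $\mathbb{P}(\tau_M<T)\le M^{-1}\mathbb{E}\int\!\!\int h^2$; this closes your argument.
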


\begin{proof}

Recall that the solution $u(t,x)$ admits the mild form
\begin{align*}
	u(t,x)=&P_tf(x)+\int_{0}^{t}\int_{\mathbb{R}} p_{t-s}(x,y)b(u(s,y))dyds\nonumber\\
	&+ \int_{0}^{t}\int_{\mathbb{R}} p_{t-s}(x,y)\sigma(u(s,y))W(ds,dy),\quad \mathbb{P}\text{-a.s.}
\end{align*}
Since the coefficients $b$ and $\sigma$ are Lipschitz, the above equation is well-posed in the space $C([0,\infty), L^2_{\lambda})$. By Lemma 3.3 in \cite{LSZ25} and Gronwall's inequality, it follows that for any $T,\lambda>0$ and $p\geq 2$, 
\begin{align}\label{260313.1659}
  \mathbb{E} \big[\sup_{t\in [0,T]} \Vert u(t) \Vert_{L^2_{\lambda}}^p \big] \leq  C_{p, T,\lambda} <\infty.
\end{align}

To prove \eqref{260210.1127}, we can and will assume  $g\in C_c^{\infty}(\mathbb{R})$, because the space $C_c^{\infty}(\mathbb{R})$ is dense in $L^2_{\lambda}(\mathbb{R})$ for any $\lambda>0$.
In the sequel, we fix an arbitrary function $g\in C_c^{\infty}(\mathbb{R})$ and  prove \eqref{260210.1127}. The proof is divided into three steps.
\vskip 0.2cm

\textbf{Step 1.} For $\varepsilon>0$,  we construct a random controlled equation for a random field $Y^\varepsilon$ with a specially designed control $h^\varepsilon$ so that $Y^\varepsilon(T)$ can be made arbitrarily close to the function $g$.

Set
\begin{align*}
  \rho(t,x) = e^{-t|x|^2}, \quad t>0, x\in\mathbb{R}.
\end{align*}
Let $t_1\in (0,T)$ be a parameter to be determined, which will be sufficiently close to $T$. Let $\varepsilon>0$ be another parameter to be determined, which will be sufficiently close to $0$. Set
\begin{align}\label{260312.1051}
  \hat{u}^{\varepsilon}(t_1,x,\omega) :=  \mathbf{1}_{\{\Vert u(t_1) \Vert_{L^2_{\lambda}}\leq\frac{1}{\varepsilon}\}}(\omega)\big(u(t_1,\cdot,\omega)\rho(T-t_1,\cdot)\big)*\phi_{\varepsilon}(x),
\end{align}
where $\phi$ is a nonnegative smooth function supported on $[-1,1]$ and satisfying $\int_{\mathbb{R}}\phi(x) dx =1$, and where $\phi_{\varepsilon}(x) := \varepsilon^{-1}\phi(x/\varepsilon)$ is the usual mollifier.
Then $\hat{u}^{\varepsilon}(t_1,\cdot,\omega)$ is smooth on $\mathbb{R}$ for $\mathbb{P}$-a.e. $\omega$.

Consider the equation:
\begin{align}\label{260318.1659}
\begin{cases}
    \partial_t Z_1(t,x) = \frac{1}{2}\partial_{xx} Z_1(t,x) + b(Z_1(t,x)) , \quad t\in [t_1, T],\\
    Z_1(t_1,x) = 0.
\end{cases}
\end{align}
Equation (\ref{260318.1659}) has a unique solution $Z_1\in C([t_1, T], L^2_{\lambda})$ which admits the representation
\[
Z_1(t)=\int_{t_1}^tP_{t-s} b(Z_1(s))ds.
\]
Hence, by condition (H1) and (\ref{260208.1019}), we have
\begin{equation}\label{260330.1432}
  \begin{aligned}
  \Vert Z_1(T)\Vert_{L^2_\lambda} \leq &  \int_{t_1}^T \Vert P_{T-s} b(Z_1(s)) \Vert_{L^2_{\lambda}} ds \leq e^{\frac{\lambda^2 T}{2}} \int_{t_1}^T [\Vert b(0) \Vert_{L^2_{\lambda}} + L_b \Vert Z_1(s) \Vert_{L^2_{\lambda}}] ds \\
  \leq & C_{T,\lambda} \times (T-t_1).
\end{aligned}
\end{equation}
$Z_1(T)$ will go to zero as $t_1\rightarrow T$. 
With $Z_1$ in hand, for $\varepsilon>0$, $t\in [t_1, T]$ and $x\in\mathbb{R}$, set
\begin{align}\label{260210.2201}
  Z_2^{\varepsilon}(t,x) := \frac{T-t}{T-t_1} \hat{u}^{\varepsilon}(t_1,x) + \frac{t-t_1}{T-t_1} g(x),
\end{align}
and
\begin{equation}\label{260330.1423}
\begin{aligned}
  h^{\varepsilon}(t,x) := &\partial_t Z_2^{\varepsilon}(t,x) - \frac{1}{2}\partial_{xx} Z_2^{\varepsilon}(t,x) - [b(Z_1(t,x)+ Z_2^{\varepsilon}(t,x)) - b(Z_1(t,x))] .
\end{aligned}
\end{equation}
Then $Z_2^{\varepsilon}$ satisfies the terminal condition $Z_2^{\varepsilon}(T,x) = g(x)$ and solves the following random PDE:
\begin{align*}
\begin{cases}
    \partial_t Z_2^{\varepsilon}(t,x) = \frac{1}{2}\partial_{xx} Z_2^{\varepsilon}(t,x) + b(Z_1(t,x)+ Z_2^{\varepsilon}(t,x)) - b(Z_1(t,x)) + h^{\varepsilon}(t,x), \quad t\in [t_1, T],\\
    Z_2^{\varepsilon}(t_1,x) = \hat{u}^{\varepsilon}(t_1,x).
\end{cases}
\end{align*}
%
Now we define the random field
$Y^\varepsilon$ as:
\begin{align}\label{260330.1442}
  Y^{\varepsilon}(t,x) := Z_1(t,x) + Z_2^{\varepsilon}(t,x), \quad t\in [t_1, T].
\end{align}
Then $Y^{\varepsilon}$ and $h^{\varepsilon}$ satisfy the following random controlled equation:
\begin{align}\label{260211.1659}
\begin{cases}
    \partial_t Y^{\varepsilon}(t,x) = \frac{1}{2}\partial_{xx} Y^{\varepsilon}(t,x) + b(Y^{\varepsilon}(t,x)) + h^{\varepsilon}(t,x), \quad t\in [t_1, T],\\
    Y^{\varepsilon}(t_1,x) = \hat{u}^{\varepsilon}(t_1,x).
\end{cases}
\end{align}
Moreover, $Y^{\varepsilon}(T)$ will be arbitrarily close to the function $g$ as $t_1$ is getting close to $T$.

For the control $ h^{\varepsilon}(t,x)$, we have the following estimate which will be used later.
\begin{align}\label{260211.1611}
  \Vert h^{\varepsilon}(t) \Vert_{L^2} \leq C_{\varepsilon, T - t_1,\lambda}, \quad \forall\  t\in [t_1,T], \quad \mathbb{P}\text{-a.s.},
\end{align}
where $L^2:=L^2(\mathbb{R})$, $C_{\varepsilon, T - t_1,\lambda}$ is a constant  independent of $\omega$.

Indeed, by (\ref{260330.1423}), (\ref{260210.2201}) and the Lipschitz property of $b$, we have
\begin{equation*}
  \begin{aligned}
      | h^{\varepsilon}(t,x)|        \leq  & \frac{ |g(x)- \hat{u}^{\varepsilon}(t_1,x)|}{T-t_1} +  \frac{1}{2}\left(\frac{T-t}{T-t_1}|\Delta\hat{u}^{\varepsilon}(t_1,x)| + \frac{t-t_1}{T-t_1}|\Delta g(x)|\right) + L_b |Z_2^{\varepsilon}(t,x)|.
  \end{aligned}
\end{equation*}
In view of the definition of $Z_2^{\varepsilon}$, to prove (\ref{260211.1611}), it suffices to show that there exists a constant $C_{\varepsilon}$, independent of $\omega$, such that
\begin{align*}
  \Vert \hat{u}^{\varepsilon}(t_1)\Vert_{L^2}^2 \leq C_{\varepsilon} \quad \text{ and }\quad \Vert \Delta\hat{u}^{\varepsilon}(t_1)\Vert_{L^2}^2 \leq C_{\varepsilon}.
\end{align*}
In fact, by Young's inequality,
\begin{equation*}
\begin{aligned}
  \Vert \big(u(t_1)\rho(T-t_1)\big)*\phi_{\varepsilon} \Vert_{L^2}^2
  \leq & \Vert u(t_1) \rho(T-t_1)\Vert_{L^2}^2 \Vert \phi_{\varepsilon} \Vert_{L^1}^2 \\
\leq & \sup_{x\in\mathbb{R}} \Big[ e^{-2(T-t_1)|x|^2+\lambda|x|} \Big]\times\int_{\mathbb{R}} |u(t_1,x)|^2 e^{-\lambda|x|} dx \\
\leq & C_{T-t_1, \lambda}\Vert u(t_1)\Vert_{L^2_{\lambda}}^2.
\end{aligned}
\end{equation*}
Hence, by the definition of $\hat{u}^{\varepsilon}(t_1)$,
\begin{align*}
   \Vert \hat{u}^{\varepsilon}(t_1)\Vert_{L^2}^2 \leq \frac{1}{\varepsilon^2} C_{T-t_1, \lambda} ,\quad \mathbb{P}\text{-a.s.}
\end{align*}
Since $\phi_{\varepsilon}$ is a mollifier, a similar argument shows that there exists a constant $C_{\varepsilon}>0$ such that, for $\mathbb{P}$-a.s.,
\begin{align*}
  \Vert\Delta\hat{u}^{\varepsilon}(t_1)\Vert_{L^2}^2 \leq C_{\varepsilon} C_{T-t_1, \lambda}.
\end{align*}
This proves (\ref{260211.1611}).
\vskip 0.2cm

\textbf{Step 2.} 
For $\varepsilon>0$, we construct the solution $u^\varepsilon$ of a stochastic controlled equation, whose law will be absolutely continuous with respect to that of the solution $u$.

For $t\in [0,t_1]$, set
\begin{align*}
  u^{\varepsilon}(t,x) := u(t,x), \quad  x\in\mathbb{R},
\end{align*}
and for $t\in [t_1,T]$, let $u^{\varepsilon}(t,x)$ be the solution of
\begin{equation}\label{260211.1700}
\begin{aligned}
    u^{\varepsilon}(t,x) := &  P_{t-t_1} u(t_1)(x) + \int_{t_1}^t\int_{\mathbb{R}} p_{t-s}(x,y)b(u^{\varepsilon}(s,y)) dsdy \\
& + \int_{t_1}^t\int_{\mathbb{R}} p_{t-s}(x,y)\sigma(u^{\varepsilon}(s,y)) W(ds,dy)  \\
& + \int_{t_1}^t\int_{\mathbb{R}} p_{t-s}(x,y)h^{\varepsilon}(s,y) dsdy, \quad  x\in\mathbb{R}.
\end{aligned}
\end{equation}

Since $u$ satisfies (\ref{3.1}), it follows that, for any $t\geq 0$,
\begin{equation}\label{260211.1650}
\begin{aligned}
    u^{\varepsilon}(t,x) := &  P_t f(x) + \int_{0}^t\int_{\mathbb{R}} p_{t-s}(x,y)b(u^{\varepsilon}(s,y)) dsdy \\
& + \int_{0}^t\int_{\mathbb{R}} p_{t-s}(x,y)\sigma(u^{\varepsilon}(s,y)) W(ds,dy) \\
& + \int_{0}^t\int_{\mathbb{R}} p_{t-s}(x,y)h^{\varepsilon}(s,y)\mathbf{1}_{[t_1,T]}(s) dsdy ,\quad x\in\mathbb{R}.
\end{aligned}
\end{equation}
For $t\geq 0$ and $x\in\mathbb{R}$, let
\[
H^{\varepsilon}(t,x) := \sigma(u^{\varepsilon}(t,x))^{-1} h^{\varepsilon}(t,x)\mathbf{1}_{[t_1,T]}(t),
\]
and
\[
\hat{W}^{\varepsilon}(t,x) := W(t,x) + \int_0^t\int_0^x H^{\varepsilon}(s,y) dsdy .
\]
It follows from (\ref{260211.1611}) and condition (H3) that
\begin{align*}
  \int_0^T\int_{\mathbb{R}} |H^{\varepsilon}(s,y)|^2 dsdy \leq C_{\varepsilon,T-t_1,\lambda}, \quad \mathbb{P}\text{-a.s.}
\end{align*}
Hence the Novikov condition holds for each fixed $\varepsilon$. By the Girsanov theorem, $\hat{W}^{\varepsilon}(t,x)$ is a Brownian sheet under probability measure $\hat{\mathbb{P}}_T^{\varepsilon}$ defined by
\begin{align*}
  d\hat{\mathbb{P}}_T^{\varepsilon} := \exp\left(-\int_{0}^{T}\int_{\mathbb{R}} H^{\varepsilon}(s,y) W(ds,dy) -\frac{1}{2} \int_{0}^{T}\int_{\mathbb{R}} |H^{\varepsilon}(s,y)|^2 dsdy \right) d\mathbb{P}.
\end{align*}
Therefore, (\ref{260211.1650}) implies that for any $t\in [0,T]$, the law of $u^{\varepsilon}(t)$ under $\hat{\mathbb{P}}_T^{\varepsilon}$ coincides with the law of $u(t)$ under the original measure $\mathbb{P}$.
 Because the two probability measures $\hat{\mathbb{P}}_T^{\varepsilon}$ and $\mathbb{P}$ are equivalent, the law of $u^{\varepsilon}$ is equivalent to the law of $u$ on $C([0, T], L^2_{\lambda})$.
\vskip 0.2cm

\textbf{Step 3.} Completion of the proof of \eqref{260210.1127}:
$$\mathbb{P}(\Vert u(T) - g\Vert_{L^2_{\lambda}} <r)>0.$$
As the law of $u^{\varepsilon}$ is equivalent to the law of $u$,  to prove \eqref{260210.1127}, it is enough to prove
\begin{align}\label{260211.1658}
   \mathbb{P}(\Vert u^{\varepsilon}(T) - g\Vert_{L^2_{\lambda}} \geq r)<1.
\end{align}
As mention in Step 1,  $Y^{\varepsilon}(T)$ can be
made arbitrarily close to the function $g$ by choosing $t_1$ sufficiently close to $T$. Hence, to prove (\ref{260211.1658}), we need to estimate  $u^\varepsilon-Y^\varepsilon$. By (\ref{260211.1659}) and (\ref{260211.1700}), for $t\geq t_1$,
\begin{equation}\label{260310.1112}
  \begin{aligned}
  \mathbb{E} \Vert u^{\varepsilon}(t) - Y^{\varepsilon}(t)\Vert_{L^2_{\lambda}}^2 \leq & 3 \mathbb{E} \Vert P_{t-t_1}\big(\hat{u}^{\varepsilon}(t_1) - u(t_1) \big)\Vert_{L^2_{\lambda}}^2 \\
  & + 3 \mathbb{E} \left\Vert \int_{t_1}^t\int_{\mathbb{R}} p_{t-s}(x,y) [ b(u^{\varepsilon}(s,y)) - b(Y^{\varepsilon}(s,y)) ] dsdy \right\Vert_{L^2_{\lambda}}^2 \\
    & + 3 \mathbb{E} \left\Vert \int_{t_1}^t\int_{\mathbb{R}} p_{t-s}(x,y) \sigma(u^{\varepsilon}(s,y)) W(ds,dy) \right\Vert_{L^2_{\lambda}}^2  \\
    =:& I+ II + III.
\end{aligned}
\end{equation}
By (\ref{260208.1019}),
\begin{equation}
  \begin{aligned}
    I\leq 3  e^{\frac{\lambda^2 (t-t_1)}{2}} \mathbb{E} \Vert \hat{u}^{\varepsilon}(t_1) - u(t_1)\Vert_{L^2_{\lambda}}^2.
  \end{aligned}
\end{equation}
By (H1), H\"{o}lder's inequality, Fubini's theorem, and (\ref{260207.1152}),
\begin{equation}\label{260310.1113}
  \begin{aligned}
  II = & 3\mathbb{E} \int_{\mathbb{R}}\left|\int_{t_1}^t\int_{\mathbb{R}} p_{t-s}(x,y) [ b(u^{\varepsilon}(s,y)) - b(Y^{\varepsilon}(s,y)) ] dsdy\right|^2 e^{-\lambda|x|} dx \\
  \leq & 3(t-t_1)\mathbb{E} \int_{\mathbb{R}} \int_{t_1}^t\int_{\mathbb{R}} p_{t-s}(x,y) L_b^2| u^{\varepsilon}(s,y) - Y^{\varepsilon}(s,y) |^2  e^{-\lambda|x|} dsdy dx \\
  \leq & 3(t - t_1)  \int_{t_1}^t\int_{\mathbb{R}}  L_b^2 e^{\frac{\lambda^2}{2}(t-s)} \mathbb{E}| u^{\varepsilon}(s,y) - Y^{\varepsilon}(s,y) |^2  e^{-\lambda|y|} dsdy \\
  \leq & 3  L_b^2 (T- t_1) e^{\frac{\lambda^2}{2}(T-t_1)} \int_{t_1}^t \mathbb{E} \Vert u^{\varepsilon}(s) - Y^{\varepsilon}(s) \Vert_{L^2_{\lambda}}^2 ds .
  \end{aligned}
\end{equation}
For the term $III$, we have
\begin{equation}\label{260310.1114}
  \begin{aligned}
    III = & 3 \mathbb{E} \bigg\Vert \int_{t_1}^t\int_{\mathbb{R}} p_{t-s}(x,y) [ \sigma(u^{\varepsilon}(s,y)) -\sigma(Y^{\varepsilon}(s,y)) ] W(ds,dy) \\
    & + \int_{t_1}^t\int_{\mathbb{R}} p_{t-s}(x,y) \sigma(Y^{\varepsilon}(s,y))  W(ds,dy) \bigg\Vert_{L^2_{\lambda}}^2 \\
    \leq & 6 \mathbb{E} \left\Vert \int_{t_1}^t\int_{\mathbb{R}} p_{t-s}(x,y) [ \sigma(u^{\varepsilon}(s,y)) -\sigma(Y^{\varepsilon}(s,y)) ] W(ds,dy) \right\Vert_{L^2_{\lambda}}^2 \\
    & + 6 \mathbb{E} \left\Vert \int_{t_1}^t\int_{\mathbb{R}} p_{t-s}(x,y)\sigma(Y^{\varepsilon}(s,y))  W(ds,dy) \right\Vert_{L^2_{\lambda}}^2 \\
    =: & III_1 + III_2 .
  \end{aligned}
\end{equation}
By the It\^{o} isometry, (\ref{260207.2020}) and (H2), we have
\begin{equation}\label{260310.1115}
  \begin{aligned}
    III_1 = & 6\mathbb{E} \int_{t_1}^t\int_{\mathbb{R}} \Vert p_{t-s}(\cdot,y)\Vert^2_{L^2_{\lambda}} | \sigma(u^{\varepsilon}(s,y)) -\sigma(Y^{\varepsilon}(s,y)) |^2 dsdy \\
    & \leq C \mathbb{E} \int_{t_1}^t \frac{1}{\sqrt{t-s}} e^{\frac{\lambda^2 (t-s)}{4}} \int_{\mathbb{R}} | \sigma(u^{\varepsilon}(s,y)) -\sigma(Y^{\varepsilon}(s,y)) |^2 e^{-\lambda|y|} dyds \\
    & \leq C L_{\sigma}^2 e^{\frac{\lambda^2 T}{4}}  \int_{t_1}^t \frac{1}{\sqrt{t-s}}  \mathbb{E} \Vert u^{\varepsilon}(s) - Y^{\varepsilon}(s) \Vert_{L^2_{\lambda}}^2 ds .
  \end{aligned}
\end{equation}
Similarly,
\begin{equation}\label{260309.1610}
  \begin{aligned}
    III_2 = & 6\mathbb{E} \int_{t_1}^t\int_{\mathbb{R}} \Vert p_{t-s}(\cdot,y)\Vert^2_{L^2_{\lambda}} | \sigma(Y^{\varepsilon}(s,y)) |^2 dsdy \\
    & \leq C  e^{\frac{\lambda^2 T}{4}} \int_{t_1}^t \frac{1}{\sqrt{t-s}}  \left( \frac{|\sigma(0)|^2}{\lambda} + L_{\sigma}^2 \mathbb{E} \Vert  Y^{\varepsilon}(s) \Vert_{L^2_{\lambda}}^2 \right) ds .
  \end{aligned}
\end{equation}
A direct calculation gives
\begin{align}\label{260310.1110}
      \Vert f\ast \phi_{\varepsilon}\Vert_{L^2_{\lambda}} \leq e^{\frac{\lambda \varepsilon}{2}} \Vert f \Vert_{L^2_{\lambda}}.
\end{align}
By (\ref{260312.1051}), (\ref{260310.1110}) and (\ref{260313.1659}), there exists a constant $C$ such that
\begin{align}\label{260604.2309}
  \sup_{\varepsilon\in (0,1]}\mathbb{E}\Vert \hat{u}^{\varepsilon}(t_1)\Vert^2_{L^2_{\lambda}} \leq C \mathbb{E}\Vert u(t_1)\Vert^2_{L^2_{\lambda}} <\infty.
\end{align}
By (\ref{260330.1442}), (\ref{260330.1432}), (\ref{260210.2201}) and (\ref{260604.2309}),
we see that
\begin{align*}
  \sup_{\varepsilon\in (0,1]}\sup_{s\in [t_1, T]} \mathbb{E} \Vert Y^{\varepsilon} (s) \Vert^2_{L^2_{\lambda}} \leq C_T <\infty.
\end{align*}
It follows from (\ref{260309.1610}) that there exists a constant $C$, independent of $\varepsilon$, such that for all $\varepsilon\in (0,1)$,
\begin{align}\label{260310.1116}
  III_2 \leq C (T-t_1)^{\frac{1}{2}}.
\end{align}
Combining (\ref{260310.1112})--(\ref{260310.1115}) and (\ref{260310.1116}) gives
\begin{equation*}
  \begin{aligned}
  \mathbb{E} \Vert u^{\varepsilon}(t) - Y^{\varepsilon}(t)\Vert_{L^2_{\lambda}}^2 \leq & Ce^{\frac{\lambda^2 (t-t_1)}{2}}\mathbb{E} \Vert \hat{u}^{\varepsilon}(t_1) - u(t_1) \Vert_{L^2_{\lambda}}^2 + C (T-t_1)^{\frac{1}{2}} \\
  & + C (T- t_1)  \int_{t_1}^t \mathbb{E} \Vert u^{\varepsilon}(s) - Y^{\varepsilon}(s) \Vert_{L^2_{\lambda}}^2 ds \\
  & + C \int_{t_1}^t \frac{1}{\sqrt{t-s}}  \mathbb{E} \Vert u^{\varepsilon}(s) - Y^{\varepsilon}(s) \Vert_{L^2_{\lambda}}^2 ds .
  \end{aligned}
\end{equation*}
By Gronwall's inequality (see, e.g., Lemma 3.2 of \cite{SWZ25}), for any $t\in [t_1, T]$,
\begin{align}\label{260330.1444}
  \mathbb{E} \Vert u^{\varepsilon}(t) - Y^{\varepsilon}(t)\Vert_{L^2_{\lambda}}^2 \leq C_T \left[ \mathbb{E} \Vert \hat{u}^{\varepsilon}(t_1) - u(t_1) \Vert_{L^2_{\lambda}}^2 + (T-t_1)^{\frac{1}{2}} \right] e^{C_T} .
\end{align}

Next, we prove that for any $\eta>0$, one can first choose $t_1\in (0,T)$ sufficiently close to $T$, and then choose $\varepsilon>0$ sufficiently small, such that
\begin{align}\label{260314.0825}
  \mathbb{E} \Vert \hat{u}^{\varepsilon}(t_1) - u(t_1) \Vert_{L^2_{\lambda}}^2 <\eta.
\end{align}
If (\ref{260314.0825}) is proved, then by (\ref{260330.1442}), (\ref{260330.1444}) and (\ref{260330.1432}), we can choose $\eta$ and then $t_1, \varepsilon$ so that
\begin{equation*}
  \begin{aligned}
  \mathbb{P}(\Vert u^{\varepsilon}(T) - g\Vert_{L^2_{\lambda}}\geq r)
  \leq & \frac{1}{r^2} \mathbb{E} [\Vert u^{\varepsilon}(T) - g\Vert_{L^2_{\lambda}}^2] \\
  = & \frac{1}{r^2} \mathbb{E} [\Vert u^{\varepsilon}(T) - Y^{\varepsilon}(T) + Y^{\varepsilon}(T) - Z_2^{\varepsilon}(T)\Vert_{L^2_{\lambda}}^2] \\
  = & \frac{2}{r^2} \mathbb{E} [\Vert u^{\varepsilon}(T) - Y^{\varepsilon}(T)\Vert_{L^2_{\lambda}}^2 + \Vert Z_1(T)\Vert_{L^2_{\lambda}}^2] \\
  \leq & \frac{C_{\lambda, T}}{r^2} \left[ \mathbb{E} \Vert \hat{u}^{\varepsilon}(t_1) - u(t_1) \Vert_{L^2_{\lambda}}^2 + (T-t_1)^{\frac{1}{2}} \right] \\
  < & 1.
\end{aligned}
\end{equation*}
Thus, (\ref{260211.1658}) is proved once (\ref{260314.0825}) is established.

Now it remains to prove (\ref{260314.0825}). By (\ref{260312.1051}), we have
\begin{equation}\label{260313.1737}
  \begin{aligned}
    & \mathbb{E} \Vert \hat{u}^{\varepsilon}(t_1) - u(t_1) \Vert_{L^2_{\lambda}}^2 \\
    \leq & 3 \mathbb{E} \Vert \hat{u}^{\varepsilon}(t_1) - \mathbf{1}_{\{\Vert u(t_1) \Vert_{L^2_{\lambda}}\leq\frac{1}{\varepsilon}\}} u(t_1)\rho(T-t_1) \Vert_{L^2_{\lambda}}^2 \\
    & + 3 \mathbb{E} \Vert  \mathbf{1}_{\{\Vert u(t_1) \Vert_{L^2_{\lambda}}\leq\frac{1}{\varepsilon}\}} u(t_1)[ \rho(T-t_1) -1 ] \Vert_{L^2_{\lambda}}^2 \\
    & + 3 \mathbb{E} \Vert  [\mathbf{1}_{\{\Vert u(t_1) \Vert_{L^2_{\lambda}}\leq\frac{1}{\varepsilon}\}} -1 ] u(t_1) \Vert_{L^2_{\lambda}}^2 \\
    =: & J_1(t_1,\varepsilon) + J_2(t_1,\varepsilon) + J_3(t_1,\varepsilon).
  \end{aligned}
\end{equation}
For the term $J_2$, note that
\begin{equation*}
  \begin{aligned}
  \Vert  u(t_1)[ \rho(T-t_1) -1 ] \Vert_{L^2_{\lambda}} \leq & \Vert  [u(t_1) - u(T)][ \rho(T-t_1) -1 ] \Vert_{L^2_{\lambda}} + \Vert  u(T)[ \rho(T-t_1) -1 ] \Vert_{L^2_{\lambda}} \\
  \leq & \Vert  u(t_1) - u(T) \Vert_{L^2_{\lambda}} + \Vert  u(T)[ \rho(T-t_1) -1 ] \Vert_{L^2_{\lambda}} .
\end{aligned}
\end{equation*}
Since $u\in C([0,\infty), L^2_{\lambda})$, $\mathbb{P}$-a.s., the dominated convergence theorem gives, $\mathbb{P}$-a.s.,
\begin{align*}
  \lim_{t_1\rightarrow T} \Vert  u(t_1)[ \rho(T-t_1) -1 ] \Vert_{L^2_{\lambda}} = 0.
\end{align*}
By (\ref{260313.1659}) and the fact that 
\[
\Vert  u(t_1)[ \rho(T-t_1) -1 ] \Vert_{L^2_{\lambda}}^2 \leq \Vert  u(t_1) \Vert_{L^2_{\lambda}}^2,
\]
we see that $\{\Vert  u(t_1)[ \rho(T-t_1) -1 ] \Vert_{L^2_{\lambda}}\}_{t_1\in [0,T]}$ is uniformly integrable in $L^2(\Omega)$. Hence, by Vitali's convergence theorem, we obtain
\begin{align*}
  \lim_{t_1\rightarrow T}\sup_{\varepsilon>0} J_2(t_1,\varepsilon) \leq 3 \lim_{t_1\rightarrow T} \mathbb{E} \Vert  u(t_1)[ \rho(T-t_1) -1 ] \Vert_{L^2_{\lambda}}^2 = 0.
\end{align*}
Hence, we can choose $t^*\in (0,T)$ sufficiently close to $T$, such that
\begin{align}\label{260314.0822}
  J_2(t^*,\varepsilon)\leq \frac{\eta}{3},\quad \forall\  \varepsilon>0.
\end{align}
For this choice of $t^*$, the dominated convergence theorem and (\ref{260313.1659}) yield
\begin{align*}
  \lim_{\varepsilon\rightarrow 0}J_3(t^*,\varepsilon) = 3 \lim_{\varepsilon\rightarrow 0}\mathbb{E} \Vert  [\mathbf{1}_{\{\Vert u(t^*) \Vert_{L^2_{\lambda}}\leq\frac{1}{\varepsilon}\}} -1 ] u(t^*) \Vert_{L^2_{\lambda}}^2 =0.
\end{align*}
Hence, we can choose $\varepsilon_3>0$ sufficiently small such that
\begin{align}\label{260314.0823}
  J_3(t^*,\varepsilon) \leq \frac{\eta}{3}, \quad \forall\ 0<\varepsilon<\varepsilon_3. 
\end{align}
By (\ref{260310.1110}) and the fact that
\begin{align*}
  \Vert h\ast \phi_{\varepsilon} - h\Vert_{L^2_{\lambda}} \rightarrow 0, \quad \forall\ h\in L^2_{\lambda},
\end{align*}
we deduce that
\begin{align*}
  \lim_{\varepsilon\rightarrow 0 } J_1(t^*,\varepsilon) \leq  3\lim_{\varepsilon\rightarrow 0} \mathbb{E} \Vert \big(u(t^*)\rho(T-t^*)\big)*\phi_{\varepsilon} - u(t^*)\rho(T-t^*) \Vert_{L^2_{\lambda}}^2 = 0.
\end{align*}
Therefore, we can choose $0<\varepsilon^*<\varepsilon_3$ sufficiently small such that
\begin{align}\label{260314.0824}
   J_1(t^*,\varepsilon^*) \leq \frac{\eta}{3}.
\end{align}
Combining (\ref{260313.1737})--(\ref{260314.0824}) yields (\ref{260314.0825}) with $t_1=t^*$ and $\varepsilon=\varepsilon^*$. This completes the proof of \eqref{260211.1658}, and hence the proof of Theorem \ref{260312.1046}. 
\end{proof}

%
%
%
%
%
%
%
%
%
%
%
%
%

\vskip 0.3cm
\noindent{\large\bf Acknowledgements}
This work is partially supported by the National Key R\&D Program of China (No. 2022YFA1006001), the National Natural Science Foundation of China (Nos. 12131019, 12571158, 12371151), and the Fundamental Research Funds for the Central Universities (No. WK0010000081).

\bibliographystyle{plain}

\end{document}